\DeclareSymbolFont{bbold}{U}{bbold}{m}{n}
\DeclareSymbolFontAlphabet{\mathbbold}{bbold}
\newcommand{\ind}{\mathbbold{1}}
\newcommand{\C}{\mathbb{C}}
\newcommand{\N}{\mathbb{N}}
\newcommand{\R}{\mathbb{R}}
\newcommand{\Z}{\mathbb{Z}}
\newcommand{\bP}{\mathbb{P}}
\newcommand{\bE}{\mathbb{E}}
\newcommand{\cD}{\mathcal{D}}
\newcommand{\cE}{\mathcal{E}}
\newcommand{\cF}{\mathcal{F}}
\newcommand{\cK}{\mathcal{K}}
\newcommand{\cL}{\mathcal{L}}
\newcommand{\cS}{\mathcal{S}}
\DeclareMathOperator\ess{ess}
\newcommand{\abs}[1]{\left| #1 \right|}
\newcommand{\norm}[1]{\left \lVert #1 \right \rVert}
\newcommand{\brac}[1]{\left\{ #1 \right\}}
\theoremstyle{plain} \newtheorem{theorem}{Theorem}[section]
\theoremstyle{plain} \newtheorem{proposition}[theorem]{Proposition}
\theoremstyle{plain} \newtheorem{lemma}[theorem]{Lemma}
\theoremstyle{plain} \newtheorem{corollary}[theorem]{Corollary}
\theoremstyle{definition} \newtheorem{definition}[theorem]{Definition}
\theoremstyle{definition} \newtheorem{notation}[theorem]{Notation}
\theoremstyle{remark} \newtheorem{remark}[theorem]{Remark}
\theoremstyle{remark}
\title{Rotatable Random Sequences in Local Fields}
\author[S.N. Evans]{Steven N. Evans}
\address{Department of Statistics \#3860\\
 367 Evans Hall \\
 University of California \\
  Berkeley, CA  94720-3860 \\
   USA} 
\email{evans@stat.berkeley.edu}
\author[D. Raban]{Daniel Raban}
\address{UCLA Mathematics Department \\
Box 951555 \\
Los Angeles, CA 90095-1555 \\
USA}
\email{danielraban@math.ucla.edu}
\thanks{SNE supported in part by NSF grant DMS-1512933
and NIH grant 1R01GM109454-01.}
\subjclass[2010]{primary 60B99; 60G09; secondary 12J25} 
\keywords{$p$-adic; $p$-series; exchangeable; spherically symmetric; Gaussian; total variation; completely monotone; nonnegative definite}
\date{\today}							
\numberwithin{equation}{section}
\begin{document}

\begin{abstract}
An infinite sequence of real random variables $(\xi_1, \xi_2, \dots)$ is said to be rotatable if every finite subsequence $(\xi_1, \dots, \xi_n)$ has a spherically symmetric distribution. A celebrated theorem of Freedman states that $(\xi_1, \xi_2, \dots)$ is rotatable if and only if $\xi_j = \tau \eta_j$ for all $j$, where $(\eta_1, \eta_2, \dots)$ is a sequence of independent standard Gaussian random variables and $\tau$ is an independent nonnegative random variable. Freedman's theorem is equivalent to a classical result of Schoenberg which says that a continuous function $\phi : \R_+ \to \C$ with $\phi(0) = 1$ is completely monotone if and only if 
$\phi_n: \R^n \to \R$ given by $\phi_n(x_1, \ldots, x_n) = \phi(x_1^2 + \cdots + x_n^2)$ is nonnegative definite for all $n \in \N$.  We establish the analogue of Freedman's theorem for sequences of random variables taking values in local fields using probabilistic methods  and then use it to establish a local field analogue of Schoenberg's result.  Along the way, we obtain a local field counterpart of an observation variously attributed to Maxwell, Poincar\'e, and Borel which says that if $(\zeta_1, \ldots, \zeta_n)$ is uniformly distributed on the sphere of radius $\sqrt{n}$ in $\R^n$, then, for fixed $k \in \N$, the distribution of $(\zeta_1, \ldots, \zeta_k)$ converges to that of a vector of $k$ independent standard Gaussian random variables as $n \to \infty$.
\end{abstract}

\maketitle

\section{Introduction}

An $n$-vector $\xi$ of real-valued random variables is \textit{rotatable} if $U \xi$ has the same distribution as $\xi$ for every $n \times n$ orthogonal matrix $U$; that is, the distribution of $\xi$ is spherically symmetric.
Similarly, an infinite sequence $(\xi_i)_{i \in \N}$ of real-valued random variables is rotatable if the vectors $(\xi_i)_{i \in [n]}$ are rotatable for every $n \in \N$ (here we use the notation $[n] := \{1,\ldots,n\}$). 
Because permutation matrices are orthogonal, it follows that a rotatable infinite sequence is exchangeable and hence, by de Finetti's theorem, distributed as a mixture of independent, identically distributed sequences.  
A famous result of Maxwell \cite{Maxwell_75, Maxwell_78} says that if a real random vector is spherically symmetric and has independent (necessarily identically distributed) entries, then the distribution of the entries is centered Gaussian.  
Combining these two observations makes plausible the celebrated theorem of Freedman \cite{MR0156369} (see, also, \cite{MR0287628, MR0343420, MR606622, MR606624, MR626767} that a rotatable infinite sequence of real-valued random variables is a scale mixture of sequences of independent standard Gaussian random variables; that is,
if $\xi$ is an infinite rotatable sequence, then $\xi = \tau \eta$, where  $(\eta_i)_{i \in \N}$ is a sequence of independent standard Gaussian random variables and $\tau$ is a nonnegative random variable that is independent of $\eta$.

We refer the reader to the {\em Historical and Bibliographical Notes} in \cite{MR2161313} for an indication of the later literature around Freedman's theorem and for remarks on the connection between this result and the classical theorem of Schoenberg \cite{MR1501980} which says that a continuous function $\phi : \R_+ \to \C$ with $\phi(0) = 1$ is completely monotone if and only if 
$\phi_n: \R^n \to \R$ given by $\phi_n(x_1, \ldots, x_n) = \phi(x_1^2 + \cdots + x_n^2)$ is nonnegative definite for all $n \in \N$.

Our primary goal, realized in Theorem~\ref{T:Freedman}, is to obtain an analogue of Freedman's theorem for sequences of random variables taking values in fields other than the fields of real and complex numbers.  
More specifically, we consider the \textit{local fields}; a local field is any locally compact, non-discrete topological  field other than $\R$ or $\C$ (a local field is, for some prime $p$, a finite algebraic extension of either the field of $p$-adic numbers or the field of Laurent series over the finite field of integers modulo $p$).   
We recall some facts about the structure of local fields and vector spaces over them in Section~\ref{S:local_fields}.
Just as in the real case, Freedman's theorem is equivalent to a structure theorem for nonnegative definite functions, and we present such a result in Section~\ref{S:Schoenberg}.

In order to search for a counterpart Freedman's theorem we need to have a parallel for the group of orthogonal matrices so that we can say what it means for a distribution to be ``spherically symmetric'' in the local field setting.  
The $n \times n$ orthogonal matrices are, of course, the matrices that preserve the Euclidean metric on $n$-dimensional space and so, denoting our local field by $\cK$,  we take as our parallel of the orthogonal matrices those matrices that are isometries of $\cK^n$, where $\cK^n$ is equipped with the natural metric described in Subsection~\ref{S:norms} -- see Subsection~\ref{S:orthogonality}.

Not surprisingly, our counterpart of Freedman's theorem also involves a suitable parallel of the class of Gaussian measures in a local field setting.  Such an analogue was considered in \cite{MR990478, MR1832433}.  
The idea is that, given one has a notion of spherical symmetry, one can take the property embodied in Maxwell's theorem to be the definition of Gaussianity for local fields.
We recall some of the elementary properties of Gaussian probability measures on local fields in Subsection~\ref{S:Gaussian} after we have laid some groundwork on Haar measure and Fourier theory on local fields in Subsection~\ref{S:Haar_measure} and Subsection~\ref{S:Fourier}, respectively.

\section{Local fields}
\label{S:local_fields}

From now on, let $\cK$ be a fixed local field.  
We refer the reader to  \cite{MR2444734, MR512894,  MR0487295} for in-depth treatments of various aspects of analysis on local fields.  
Any result that we state without a proof or a bibliographic citation may be found in these references.

\subsection{Basics}
\label{S:basics}

There is a distinguished real-valued mapping on $\cK$ which we denote by $\vert \cdot \vert$;
this map has the properties
\begin{align}
&\vert x \vert = 0 \iff x=0, \\
&\vert xy \vert = \vert x \vert \vert y \vert, \\
&\vert x+y \vert \le \vert x \vert \vee \vert y \vert,
\end{align}
and takes the values $\{0\} \cup \{ q^m : m \in \Z \}$, where $q = p^c$ for some prime $p$ and positive integer $c$.

A map with properties (1.1)-(1.3) is called a \textit{non-Archimedean valuation}. Property (1.3) is known as the {\it ultrametric inequality} or the {\it strong triangle inequality}. The mapping $(x,y) \mapsto \vert x-y \vert$ on $\cK \times \cK$ is a metric on $\cK$ which induces the topology of $\cK$. 
The metric space $\cK$ is a complete, totally disconnected, \textit{ultrametric space} under this metric.

Write $\cD$ for the closed unit ball $\{ x: \vert x \vert \le 1 \}$.
Choosing $\rho \in \cK$ so that $\vert \rho \vert =q^{-1}$,  we have
\[
\rho^k \cD = \{ x: \vert x \vert
\le q^{-k} \}
= \{ x: \vert x \vert < q^{-(k-1)} \}
\]
for each $k \in \Z$; in particular, $\rho^k \cD$ is both open and closed for each $k \in \Z$.

The set $\cD$ is a ring, called the \textit{ring of integers} of $\cK$. 
Each of the sets $\rho^k \cD$, $k \in \Z$, is a compact $\cD$-submodule of $\cK$, and every non-trivial compact $\cD$-submodule of $\cK$ is of this form. 
For $\ell <k$ the additive quotient group $\rho^\ell \cD / \rho^k \cD$ has order $q^{k-\ell}$. Consequently, $\cD$ is the union of $q$ disjoint translates (that is, cosets) of $\rho \cD$. 
Each of these cosets is, in turn, the union of $q$ disjoint translates of $\rho^2 \cD$, and so on.  

\begin{remark}
We can thus think of the collection of balls contained in $\cD$ as being arranged in an infinite rooted $q$-ary tree: the root is $\cD$ itself, the nodes at level $k$ are the balls of radius $q^{-k}$ (cosets of $\rho^k \cD$), and the $q$ ``children'' of such a ball are the $q$ cosets of $\rho^{k+1} \cD$ that it contains. We can uniquely associate each point in $\cD$ with the sequence of balls that contain it, and so we can think of the points in $\cD$ as the boundary of this tree.
\end{remark}

\subsection{Norms}
\label{S:norms}

A \textit{norm} on a vector space $\cE$ over the local field $\cK$ is a real-valued mapping $\|\cdot\|$ on $\cE$ with the properties
\begin{align}
&\norm{x} = 0 \iff x=0, \\
&\norm{\lambda x} = |\lambda| \norm{x}, \quad  \lambda \in \cK, \\
&\norm{x+y} \leq \norm{x} \vee \norm{y}.
\end{align}
The norm induces a metric on $\cE$ by $(x,y) \mapsto \norm{x-y}$.
The resulting metric space is an ultrametric space. 
We always take the norm on $\cK^n$ to be the one given by
\[
\norm{(x_i)_{i \in [n]}} := \bigvee_{i \in [n]} |x_i|.
\]
The space $\cK^n$ is complete under this metric.  
In general, a \textit{$\cK$-Banach space} is a vector space $\cE$ over $\cK$ that is equipped with a norm such that the resulting metric space is complete.

\subsection{Orthogonality}
\label{S:orthogonality}

The following definition of orthogonality in a normed vector space over $\cK$ mimics a characterization of orthogonality in an inner product space over $\R$ that does not explicitly involve the inner product and instead is in terms of the induced metric.

\begin{definition}
Given a subset $G$ of a normed vector space $\cE$ over $\cK$, write $\langle G \rangle$ for the linear span of $G$.  
A subset $F$ of $\cE$ is \textit{$\cK$-orthogonal} if
$\|x - y\|  \ge \|x\|$ for all $x \in F$ and  $y \in \langle F \setminus \{x\} \rangle$; that is, the best approximation of $x$ in the vector space $\langle F \setminus \{x\} \rangle$ is $0$.
Equivalently, a subset $F$ is $\cK$-orthogonal if for any finite subset $\brac{x_1,\dots,x_n} \subseteq F$ and collection of scalars $\alpha_1, \ldots, \alpha_n \in \cK$,
\[
\norm{\sum_{i \in [n]} \alpha_i x_i} = \bigvee_{i \in [n]} |\alpha_i| \norm{x_i}.
\]
A subset $F$ of $\cE$ is \textit{$\cK$-orthonormal} if it is $\cK$-orthogonal and $\|x\| = 1$ for all $x \in F$.
\end{definition}

\begin{remark}
\label{R:orthonomality_equivalent}
If $F$ is $\cK$-orthonormal, then, 
for any finite subset $\brac{x_1,\dots,x_n} \subseteq F$ and collection of scalars $\alpha_1, \ldots, \alpha_n \in \cK$,
$
\norm{\sum_{i \in [n]} \alpha_i x_i} = \bigvee_{i \in [n]} |\alpha_i| \norm{x_i} = \bigvee_{i \in [n]} |\alpha_i|. 
$
Conversely, suppose that for any finite subset $\brac{x_1,\dots,x_n} \subseteq F$ and collection of scalars $\alpha_1, \ldots, \alpha_n \in \cK$ we have
$
\norm{\sum_{i \in [n]} \alpha_i x_i} =  \bigvee_{i \in [n]} |\alpha_i|. 
$
Taking $\alpha_j = 1$ and $\alpha_i = 0$ for $i \ne j$, $j \in [n]$,  we see that $\norm{x_j} = 1$ for $j \in [n]$ and hence 
$
\norm{\sum_{i \in [n]} \alpha_i x_i} =  \bigvee_{i \in [n]} |\alpha_i| \norm{x_i};
$
that is, $F$ is $\cK$-orthonormal.
\end{remark}

\begin{theorem}
\label{T:orthogonal_matrix_equivalents}
The following are equivalent for an $n \times n$ matrix $U$. 
\begin{itemize}
\item[(i)] The matrix $U$ is an isometry of $\cK^n$.
\item[(ii)] The matrix $U$ is invertible and the entries of both $U$ and $U^{-1}$ lie in $\cD$. 
\item[(iii)] The columns of $U$ are $\cK$-orthonormal.
\item[(iv)] The rows of $U$ are $\cK$-orthonormal.
\item[(v)] The entries of $U$ lie in $\cD$ and $|\det U| = 1$.
\end{itemize}
\end{theorem}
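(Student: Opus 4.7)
The plan is to establish the cycle (i) $\iff$ (iii) $\iff$ (ii) $\iff$ (v), then deduce (iv) by applying the equivalences already obtained to the transpose $U^T$. The key technical ingredients are the ultrametric inequality, the characterization of $\cK$-orthonormality from Remark~\ref{R:orthonomality_equivalent}, and Cramer's rule.

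For (i) $\iff$ (iii), write $U = [u_1 \mid \cdots \mid u_n]$ with $u_j = Ue_j$. For $x = \sum_j \alpha_j e_j$ we have $\norm{x} = \bigvee_j |\alpha_j|$ by definition of the norm on $\cK^n$, and $Ux = \sum_j \alpha_j u_j$. Thus $U$ is an isometry iff $\norm{\sum_j \alpha_j u_j} = \bigvee_j |\alpha_j|$ for all choices of scalars, which by Remark~\ref{R:orthonomality_equivalent} is equivalent to $\{u_1,\ldots,u_n\}$ being $\cK$-orthonormal. For (i) $\iff$ (ii): if $U$ is an isometry then it is injective on a finite-dimensional space, so invertible, and $U^{-1}$ is also an isometry; any isometry $V$ satisfies $|V_{ij}| = |(Ve_j)_i| \le \norm{Ve_j} = 1$, putting all entries in $\cD$. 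Conversely, if the entries of $U$ lie in $\cD$, the ultrametric inequality gives
\[
|(Ux)_i| = \left| \sum_j U_{ij} x_j \right| \le \bigvee_j |U_{ij}| |x_j| \le \norm{x},
\]
so $\norm{Ux} \le \norm{x}$; the same estimate for $U^{-1}$ applied to $Ux$ gives the reverse inequality, yielding $\norm{Ux} = \norm{x}$.

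For (ii) $\iff$ (v): if (ii) holds, then $\det U$ is a polynomial in entries of $U$ (hence in $\cD$) so $|\det U| \le 1$ by the ultrametric inequality, and applying the same bound to $U^{-1}$ gives $|\det U|^{-1} \le 1$, forcing $|\det U| = 1$. Conversely, from (v) Cramer's rule expresses the entries of $U^{-1}$ as signed minors divided by $\det U$; each minor lies in $\cD$ and division by a unit of $\cD$ preserves this, so $U^{-1}$ has entries in $\cD$. Finally, (iv) is obtained by noting that condition (ii) is symmetric under transposition, so applying (ii) $\iff$ (iii) to $U^T$ shows that the columns of $U^T$---that is, the rows of $U$---are $\cK$-orthonormal precisely when (ii) holds for $U^T$, which in turn is equivalent to (ii) for $U$.

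The main obstacle is the equivalence of (iii) and (iv). Over $\R$ this is immediate from $U^T U = I \iff U U^T = I$, but here there is no inner product linking rows to columns, and condition (iii) (an assertion about norms of $\cK$-linear combinations of column vectors) has no obvious translation into a statement about row vectors. The route through the transpose-symmetric condition (ii) is therefore essential, which is why I organize the proof around (ii) as the central hub rather than trying to derive (iv) directly from (iii).
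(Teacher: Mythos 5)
Your proof is correct and follows essentially the same route as the paper: (i) $\iff$ (iii) via Remark~\ref{R:orthonomality_equivalent}, (ii) $\iff$ (v) via the valuation of the determinant and Cramer's rule, and (iv) deduced from the transpose-symmetry of (ii). The only difference is that you supply a direct argument for (i) $\iff$ (ii) (isometry $\Rightarrow$ invertible with entries of $U$ and $U^{-1}$ in $\cD$, and conversely via the ultrametric estimate applied to both $U$ and $U^{-1}$), whereas the paper simply cites Remark 3.2 of \cite{MR1934156} for this step; your argument there is correct.
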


\begin{proof}
(i) $\iff$ (ii):  See Remark 3.2 in \cite{MR1934156}.

(i) $\iff$ (iii):  Note that
\begin{align*}
& \norm{Ux} = \norm{x},  
\quad \forall x \in \cK^n, \\
&\iff \bigvee_{i \in [n]} |(Ux)_i| = \bigvee_{i \in [n]} |x_i|, 
\quad \forall x \in \cK^n, \\
&\iff \bigvee_{i \in [n]} \abs{\sum_{j \in [n]} U_{i,j}x_j} = \bigvee_{i \in [n]} |x_i|,
 \quad \forall x \in \cK^n, \\
&\iff \norm{\sum_{j \in [n]} x_j (U_{i,j})_{i \in [n]}} = \bigvee_{i \in [n]} |x_i|,
 \quad \forall x \in \cK^n, \\
&\iff \text{the columns of $U$ are orthonormal},
\end{align*}
where we applied Remark~\ref{R:orthonomality_equivalent} for the last equivalence.

(i) $\iff$ (iv): We have already shown that (i), (ii), and (iii) are equivalent.
It remains to observe that $U$ is invertible with 
$U$ and $U^{-1}$ both having entries in $\cD$ if and only if the transpose
$U^\top$ is invertible with
$U^\top$ and $(U^\top)^{-1}$ both having entries in $\cD$.

(ii) $\iff$ (v):  If (ii) holds, then it follows from the properties of the valuation that
$|\det U| \le 1$ and $|\det U|^{-1} = |(\det U)^{-1}| = |\det U^{-1}| \le 1$, so that $|\det U| = 1$ and
(v) holds.  
Conversely, if (v) holds, then (ii) follows from Cramer's rule.
\end{proof}

\begin{notation}
In light of Theorem~\ref{T:orthogonal_matrix_equivalents}, we write
$\mathrm{GL}_n(\cD)$ for the group of matrices that satisfy the equivalent conditions of the
theorem and say that these matrices are $\cK$-orthogonal.
\end{notation}

\begin{notation}
For $n \in \N$, denote the unit sphere $\{x \in \cK^n : \norm{x} = 1\} = \cD^n \setminus (\rho \cD)^n$ by $\cS^{(n)}$.
\end{notation}

\begin{lemma}
\label{L:transitive}
The group of matrices $\mathrm{GL}_n(\cD)$ acts transitively on the set $\cS^{(n)}$;
that is, given $x,y \in \cK^n$ with $\norm{x} = \norm{y} = 1$, there is exists $U \in \mathrm{GL}_n(\cD)$
such that $U x = y$.
\end{lemma}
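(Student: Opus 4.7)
The plan is to reduce the transitivity claim to showing that for every $x \in \cS^{(n)}$, there exists a matrix $V_x \in \mathrm{GL}_n(\cD)$ whose first column is $x$. Once this is done, given $x, y \in \cS^{(n)}$, the matrix $U := V_y V_x^{-1}$ lies in $\mathrm{GL}_n(\cD)$ and sends $x$ to $y$, because $V_x$ sends the first standard basis vector $e_1$ to $x$ and $V_y$ sends $e_1$ to $y$.

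To construct $V_x$, I would first observe that since $\|x\| = \bigvee_i |x_i| = 1$, there is some index $k$ with $|x_k| = 1$. The permutation matrix that swaps coordinates $1$ and $k$ trivially satisfies criterion (v) of Theorem~\ref{T:orthogonal_matrix_equivalents} (entries in $\cD$, determinant $\pm 1$), so it belongs to $\mathrm{GL}_n(\cD)$. Multiplying $x$ by this permutation, I may reduce to the case $|x_1| = 1$. Then I would take $V_x$ to be the lower-triangular matrix whose first column is $x$ and whose $j$-th column for $j \geq 2$ is the standard basis vector $e_j$. All of its entries lie in $\cD$ because $|x_i| \leq \|x\| = 1$ for every $i$, and $\det V_x = x_1$ has absolute value $1$. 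Criterion (v) of Theorem~\ref{T:orthogonal_matrix_equivalents} then gives $V_x \in \mathrm{GL}_n(\cD)$, and $V_x e_1 = x$ is immediate from the shape of $V_x$.

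I do not anticipate any real obstacle: the ultrametric structure of $\cK$ makes this much softer than its Archimedean counterpart, where one would need an actual Gram--Schmidt procedure to extend a unit vector to an orthonormal basis. One could alternatively verify the $\cK$-orthonormality of the column set $\{x, e_2, \ldots, e_n\}$ directly via Remark~\ref{R:orthonomality_equivalent}, by a short case analysis of $|\alpha_1 x_j|$ versus $|\alpha_j|$ using the strong triangle inequality, but invoking the determinantal criterion is cleaner and avoids any calculation.
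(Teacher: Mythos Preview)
Your proof is correct and follows essentially the same route as the paper's: reduce transitivity to the statement that every $x \in \cS^{(n)}$ occurs as the first column of some matrix in $\mathrm{GL}_n(\cD)$, then build that matrix out of $x$ together with $n-1$ standard basis vectors and verify membership in $\mathrm{GL}_n(\cD)$ via the determinant criterion (v). The only cosmetic difference is that the paper directly takes the remaining columns to be the $e_j$ with $j \ne i$ (where $|x_i| = 1$), obtaining $\det U = \pm x_i$, whereas you first permute to arrange $|x_1| = 1$ and then use $e_2,\ldots,e_n$; the two constructions differ by exactly that permutation matrix.
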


\begin{proof}
Let $\mathbf{e}_i$, $i \in [n]$, be the coordinate vectors in $\cK^n$; that is $\mathbf{e}_i$ is the vector with $1$ in the
$i^\mathrm{th}$ coordinate and $0$ elsewhere.
Because $\mathrm{GL}_n(\cD)$ is a group, it suffices to show that for any $x \in \cK^n$ with $\norm{x} = 1$ 
there exists a matrix $U \in \mathrm{GL}_n(\cD)$ such that $U \mathbf{e}_1 = x$.  

Fix such an $x$.  Because $\norm{x} = 1$, there is at least one coordinate, say $i$, such that $|x_i| = 1$.  Let
$U$ be an $n \times n$ matrix that has first column equal to $x$ and remain columns given by the $n-1$ vectors $\mathbf{e}_j$, $j \ne i$,
listed in some order.  Note that $U \mathbf{e}_1 = x$.  Clearly, $\det U = \pm x_i$ so that $|\det U| = |x_i| = 1$ and
$U \in \mathrm{GL}_n(\cD)$.
\end{proof}

\subsection{Haar measure}
\label{S:Haar_measure}

There is a unique measure $\lambda$ on $\cK$ which has the properties
\[
\lambda(x+A) = \lambda(A)
\]
and
\[
\lambda(xA) = |x| \lambda(A)
\]
for each $x \in \cK$,
 and 
\[
\lambda(\cD) = 1;
\]
the measure $\lambda$ is just the suitably normalized Haar measure on the additive group of $\cK$.

For $n \in \N$, the measure $\lambda^{\otimes n}$ on $\cK^n$ has the properties
\[
\lambda^{\otimes n}(x+A) = \lambda^{\otimes n}(A)
\]
for each $x \in \cK^n$ and 
\[
\lambda^{\otimes n}(M A) = |\det M| \, \lambda^{\otimes n}(A)
\]
for each $n \times n$ matrix $M$.  In particular, $\lambda^{\otimes n}$ is 
$\mathrm{GL}_n(\cD)$-invariant.

\begin{notation}
Write $\gamma$ for the restriction of the measure $\lambda$ to $\cD$ and, for $n\in \N$, set $\gamma_n := \gamma^{\otimes n}$.
Denote by $\sigma_n$ the probability measure obtained by conditioning the probability measure $\gamma_n$ on the set
$\cS^{(n)}$; that is, $\sigma_n$ is $\gamma_n(\cdot \cap \cS^{(n)})$ normalized to be a probability measure.
\end{notation}

\begin{proposition}
\label{P:invariant}
The probability measure $\sigma_n$ is the unique probability measure on $\cS^{(n)}$ that is invariant under the action of
$\mathrm{GL}_n(\cD)$.
\end{proposition}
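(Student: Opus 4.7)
The plan is to split the claim into two parts: to verify that $\sigma_n$ is itself $\mathrm{GL}_n(\cD)$-invariant, and then to show that any $\mathrm{GL}_n(\cD)$-invariant probability measure on $\cS^{(n)}$ must coincide with $\sigma_n$. For the invariance half, fix $U \in \mathrm{GL}_n(\cD)$. Theorem~\ref{T:orthogonal_matrix_equivalents} gives $|\det U| = 1$, so $\lambda^{\otimes n}$ is $U$-invariant by the stated transformation formula. The matrix $U$ maps $\cD^n$ to itself because the entries of both $U$ and $U^{-1}$ lie in $\cD$, and it preserves $\cS^{(n)}$ because it is an isometry of $\cK^n$. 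Consequently $\gamma_n(\,\cdot\, \cap \cS^{(n)})$ is $U$-invariant, and so is its normalization $\sigma_n$.

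For uniqueness, the essential structural fact I would establish first is that $\mathrm{GL}_n(\cD)$ is a compact topological group. By characterization~(ii) of Theorem~\ref{T:orthogonal_matrix_equivalents} the group sits inside the compact space of $n \times n$ matrices with entries in $\cD$, and characterization~(v) together with continuity of $M \mapsto \det M$ and of $|\cdot|$ show that the locus $\{|\det M| = 1\}$ is closed; hence $\mathrm{GL}_n(\cD)$ is compact. This supplies a (left- and right-) Haar probability measure $\nu$ on $\mathrm{GL}_n(\cD)$, which I use as an averaging tool.

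With $\nu$ in hand, I would run the standard transitive-action averaging argument. Given any $\mathrm{GL}_n(\cD)$-invariant probability measure $\mu$ on $\cS^{(n)}$ and any continuous $f : \cS^{(n)} \to \R$, set
\[
\bar f(x) := \int_{\mathrm{GL}_n(\cD)} f(Ux) \, d\nu(U).
\]
Left-invariance of $\nu$ makes $\bar f$ a $\mathrm{GL}_n(\cD)$-invariant continuous function, and the transitivity of the action supplied by Lemma~\ref{L:transitive} then forces $\bar f$ to be a constant $c_f$. A Fubini argument combining the $\mu$-invariance of the inner integral with the definition of $\bar f$ gives $\int f \, d\mu = \int \bar f \, d\mu = c_f$, and the right-hand side is patently independent of $\mu$. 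Since this determines $\int f \, d\mu$ for every continuous $f$, uniqueness follows. The main obstacle I foresee is purely bookkeeping: cleanly assembling compactness of $\mathrm{GL}_n(\cD)$ and the Fubini identity. The conceptual content is carried entirely by the transitivity lemma and the invariance properties already recorded for $\lambda^{\otimes n}$.
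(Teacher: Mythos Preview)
Your proposal is correct. The invariance half matches the paper's proof essentially verbatim, just with more detail filled in. For uniqueness the paper's own proof takes a shorter route: it simply invokes the general black-box Theorem~\ref{T:invariant} (existence and uniqueness of invariant Radon measures for proper transitive actions of locally compact groups) together with Lemma~\ref{L:transitive}. Your argument instead establishes compactness of $\mathrm{GL}_n(\cD)$ explicitly, pulls out Haar measure, and runs the averaging/Fubini computation by hand. This is exactly the ``alternative route'' the paper records in Remark~(i) immediately following the proposition, so you have rediscovered that variant. The trade-off is the expected one: the paper's citation of Theorem~\ref{T:invariant} is quicker but hides the mechanism, while your version is self-contained and makes transparent why transitivity forces uniqueness.
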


\begin{proof}
It is clear that $\gamma_n$ is invariant under the action of $\mathrm{GL}_n(\cD)$.  
Because
$\cS^{(n)}$ is a subset of $\cD^n$ that has positive $\gamma_n$ measure (namely, $1 - q^{-n}$) that is invariant under the action of $\mathrm{GL}_n(\cD)$, it follows that $\sigma_n$ is invariant under the action of $\mathrm{GL}_n(\cD)$.
It therefore remains to establish the uniqueness claim.  This, however, is immediate from Theorem~\ref{T:invariant} below
and Lemma~\ref{L:transitive}.  
\end{proof} 

\begin{remark}
(i) The uniqueness claim in Proposition~\ref{P:invariant} may be established by an alternative route.
Because $\mathrm{GL}_n(\cD)$ is a compact, second countable, Hausdorff group, it possesses a Haar measure which is unique if we normalize it to be a probability measure.  Denote this probability measure by $\mu$.  Suppose that $\nu$ is a $\mathrm{GL}_n(\cD)$-invariant probability measure on $\cS^{(n)}$.
For any positive Borel function $f$ on $\cS^{(n)}$ and $U \in \mathrm{GL}_n(\cD)$ we have
$\int f(x) \, \nu(dx) = \int f(U x) \, \nu(dx)$ and so 
$\int f(x) \, \nu(dx) = \int \int f(U x) \, \nu(dx) \, \mu(dU) = \int \int f(U x) \, \mu(dU) \, \nu(dx)$.
From Lemma~\ref{L:transitive} we know for each $x \in \cS^n$ that there is a matrix $V \in \mathrm{GL}_n(\cD)$ such that $x = V \mathbf{e}_1$, where $\mathbf{e}_1 = (1,0,\ldots,0)^\top$.  
Therefore, by the invariance of $\mu$,  $ \int f(U x) \, \mu(dU) = \int f(U V \mathbf{e}_1) \, \mu(dU) = \int f(U  \mathbf{e}_1) \, \mu(dU)$ and hence $\int f(x) \, \nu(dx)$ is necessarily $\int f(U  \mathbf{e}_1) \, \mu(dU)$.

\noindent
(ii) It is possible to describe the Haar measure on $\mathrm{GL}_n(\cD)$ quite concretely.  Let $\gamma_{n \times n}$ be the probability measure on $n \times n$ matrices given by $\gamma_{n \times n}((dm_{i,j})_{i,j \in [n]}) = \bigotimes_{i,j \in [n]} \gamma(dm_{i,j})$; that is, if $M$ is distributed according to $\gamma_{n \times n}$, then the entries of $M$ are independent and each is distributed according to $\gamma$.  The Haar measure on $\mathrm{GL}_n(\cD)$ is just $\gamma_{n \times n}$ conditioned on the set $\mathrm{GL}_n(\cD)$; that is, the Haar measure is the restriction of $\gamma_{n \times n}$ to $\mathrm{GL}_n(\cD)$ normalized to be a probability measure.  Because we don't need this result in what follows, we leave the (straightforward) proof to the reader.  We note in passing that the $\gamma_{n \times n}$ measure of $\mathrm{GL}_n(\cD)$ is $\prod_{i \in [n]} (1 - q^{-i})$
-- see \cite[Theorem~4.1]{MR1934156}.

\noindent
(iii) As we have seen, the probability measure $\gamma_n$ is $\mathrm{GL}_n(\cD)$-invariant.  On the other hand, if the push-forward of $\gamma_n$ by an $n \times n$ matrix $M$ is again $\gamma_n$, then property (v) of Theorem~\ref{T:orthogonal_matrix_equivalents} holds and $M \in \mathrm{GL}_n(\cD)$.  We may therefore add another equivalent property to the list in Theorem~\ref{T:orthogonal_matrix_equivalents}.  A similar remark holds with $\gamma_n$ replaced by $\sigma_n$.
\end{remark}

For the sake of completeness, we state the following classical result on the existence and uniqueness of measures invariant under the
action of a group (see, for example, \cite[Theorem~2.29]{MR1876169}).

\begin{theorem}
\label{T:invariant}
Let $G$ be a locally compact, second countable, Hausdorff group of measurable transformations on a locally compact, second countable,
Hausdorff space $S$.  Suppose that $G$ acts properly (that is, the set $\{g \in G : g s \in K\} \subseteq G$ is compact for all $s \in S$
and compact $K \subseteq S$) and transitively (that is, given $s,t \in S$ there exists $g \in G$ such that $g s = t$).  
Then, up to normalization, there is a unique non-zero $G$-invariant Radon measure on $S$.
\end{theorem}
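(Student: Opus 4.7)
The plan is to reduce the problem to the existence and uniqueness of Haar measure on $G$ by realizing $S$ as a homogeneous space. Fix an arbitrary base point $s_0 \in S$ and let $H := \{g \in G : g s_0 = s_0\}$ be its isotropy subgroup. Applying the properness hypothesis with the compact set $K = \{s_0\}$ shows that $H$ is a compact subgroup of $G$. Transitivity then yields a continuous bijection $\Phi: G/H \to S$ defined by $\Phi(gH) := g s_0$. Properness of the action further implies that the orbit map $g \mapsto g s_0$ is a proper map (preimages of compacts are compact), and a standard argument in a locally compact, second countable Hausdorff setting shows that this forces $\Phi$ to be a homeomorphism. Thus it suffices to establish existence and uniqueness of a left $G$-invariant nonzero Radon measure on $G/H$.

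For existence, let $\mu_G$ denote a left Haar measure on $G$, which exists and is unique up to scaling by the classical Haar theorem. Push $\mu_G$ forward via $g \mapsto gH$ (equivalently via $g \mapsto g s_0$) to obtain a Borel measure $\nu$ on $S$. Left-invariance of $\mu_G$ immediately gives $G$-invariance of $\nu$. To check that $\nu$ is Radon (finite on compacts), observe that for any compact $K \subseteq S$ the set $\{g \in G : g s_0 \in K\}$ is compact by the properness hypothesis, so $\nu(K) = \mu_G(\{g : g s_0 \in K\}) < \infty$. This yields a nonzero $G$-invariant Radon measure on $S$.

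For uniqueness, I use the standard averaging construction. Let $\mu_H$ be the normalized Haar measure on the compact group $H$. For $f \in C_c(G)$, define
\[
f^H(g) := \int_H f(gh) \, \mu_H(dh),
\]
which is continuous, right-$H$-invariant, and supported in $\mathrm{supp}(f) \cdot H$ (compact because $H$ is compact), so $f^H$ descends to an element $\tilde f \in C_c(G/H) \cong C_c(S)$. Given any $G$-invariant Radon measure $\nu$ on $S$, the linear functional
\[
\Lambda_\nu(f) := \int_S \tilde f \, d\nu, \qquad f \in C_c(G),
\]
is positive and left-$G$-invariant on $C_c(G)$. By the uniqueness clause of the Haar theorem, $\Lambda_\nu = c(\nu) \int_G (\cdot) \, d\mu_G$ for some constant $c(\nu) \geq 0$. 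Applied to two nonzero $G$-invariant Radon measures $\nu_1, \nu_2$, this gives $c(\nu_2)\, \Lambda_{\nu_1} = c(\nu_1)\, \Lambda_{\nu_2}$, and hence $c(\nu_2)\, \nu_1 = c(\nu_1)\, \nu_2$ on $S$, provided the map $f \mapsto \tilde f$ exhausts $C_c(S)$.

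The main obstacle is precisely this last surjectivity: given $\phi \in C_c(S)$, one must produce $f \in C_c(G)$ with $\tilde f = \phi$. The argument takes a compactly supported continuous extension of $\phi \circ \Phi$ from $\pi^{-1}(\mathrm{supp}\,\phi)$ to $G$ (where $\pi: G \to G/H$ is the quotient map) and normalizes it, using a compactly supported function on $G$ whose $H$-average is identically $1$ on $\pi^{-1}(\mathrm{supp}\,\phi)$; such a function is built via a partition of unity and the compactness of $H$. Verifying that $\Phi$ is in fact a homeomorphism (not merely a continuous bijection) is another place where the full strength of the properness hypothesis enters, and I would want to be careful there as well.
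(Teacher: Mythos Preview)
The paper does not prove this theorem at all: it is stated ``for the sake of completeness'' as a classical result, with a citation to Kallenberg's \emph{Foundations of Modern Probability} (Theorem~2.29 there) in lieu of a proof. So there is no ``paper's own proof'' to compare against.

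Your sketch is the standard homogeneous-space argument and is essentially correct. A few small comments. Your existence step---pushing forward left Haar measure along $g \mapsto g s_0$---works here precisely because $H$ is compact: this guarantees both that $\Delta_G|_H \equiv 1$ (so a $G$-invariant measure on $G/H$ exists) and that the quotient map is proper (so the pushforward is Radon), the latter of which you verify directly via the action's properness. In the uniqueness step, you correctly isolate the two delicate points: that $\Phi$ is a homeomorphism (not just a continuous bijection), and that the averaging map $f \mapsto \tilde f$ from $C_c(G)$ onto $C_c(G/H)$ is surjective. Both are standard but require care; your indications of how to handle them are on the right track. One minor addition for uniqueness: you should note that $c(\nu) > 0$ whenever $\nu \ne 0$, which follows once surjectivity of $f \mapsto \tilde f$ is in hand, so that the proportionality $c(\nu_2)\,\nu_1 = c(\nu_1)\,\nu_2$ is genuinely informative.
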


The following corollary is an easy consequence of Proposition~\ref{P:invariant}, but we include the proof for the sake of completeness.

\begin{corollary}
\label{C:polar_decomposition}
Let $\xi$ be a $\cK^n$-valued random variable
such that $U \xi$ has the same distribution as $\xi$ for all $U \in \mathrm{GL}_n(\cD)$. Possibly on some extension
$(\bar \Omega, \bar{\cF}, \bar \bP)$
of
$(\Omega, \cF, \bP)$
there is a $\sigma_n$-distributed, 
$\cS^{(n)}$-valued random variable $\vartheta$ and a $\{\rho^m : m \in \Z\} \cup \{0\}$-valued random variable
$R$ independent of $\vartheta$ such that $\xi = R \vartheta$.  Consequently, if $\nu$ is a $\mathrm{GL}_n(\cD)$-invariant probability measure on $\cK^n$, then $\nu$ is the push-forward of $\pi \otimes \sigma_n$ by the map $(r,x) \mapsto r x$ for some probability measure $\pi$ on $\{\rho^m : m \in \Z\} \cup \{0\}$.
\end{corollary}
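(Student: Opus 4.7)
The natural approach is a polar decomposition $\xi = R \vartheta$ with $R$ recording the norm and $\vartheta$ recording the direction on the sphere $\cS^{(n)}$.  Since $\vert\cdot\vert$ on $\cK$ takes values in $\{0\} \cup \{q^m : m \in \Z\}$, on the event $\{\|\xi\| = q^m\}$ (for $m \in \Z$) I set $R := \rho^{-m}$ and $\vartheta := R^{-1} \xi = \rho^m \xi$, so that $\|\vartheta\| = 1$, i.e.\ $\vartheta \in \cS^{(n)}$.  The only place where $\vartheta$ is genuinely undefined is the atom $\{\xi = 0\}$, where I put $R := 0$ and, after enlarging $(\Omega,\cF,\bP)$ to carry an auxiliary $\sigma_n$-distributed $\vartheta'$ that is independent of $\xi$, I define $\vartheta := \vartheta'$ there.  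This is the sole reason an extension is needed, and a standard product-space construction supplies one; the measurability of the map $\xi \mapsto (R,\vartheta)$ off $\{\xi=0\}$ is automatic because $\|\cdot\|$ is continuous with discrete range and multiplication by $\rho^m$ is continuous.

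The heart of the argument is to identify, for each $m \in \Z$ with $\bP(\|\xi\| = q^m) > 0$, the conditional law $\nu_m$ of $\vartheta$ given $\|\xi\| = q^m$; this $\nu_m$ is supported on $\cS^{(n)}$.  For $U \in \mathrm{GL}_n(\cD)$, the sphere $\{x \in \cK^n : \|x\| = q^m\}$ is $U$-invariant (Theorem~\ref{T:orthogonal_matrix_equivalents}), while $U\xi \stackrel{d}{=} \xi$ and the commutation $U(\rho^m\xi) = \rho^m(U\xi)$ together force $\nu_m(U \,\cdot\,) = \nu_m(\cdot)$.  Thus $\nu_m$ is a $\mathrm{GL}_n(\cD)$-invariant probability on $\cS^{(n)}$, and the uniqueness part of Proposition~\ref{P:invariant} gives $\nu_m = \sigma_n$.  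Combined with the construction on $\{\xi = 0\}$, this shows that the conditional law of $\vartheta$ given any value of $R$ is $\sigma_n$; hence $\vartheta \sim \sigma_n$ unconditionally and $\vartheta$ is independent of $R$.

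The second assertion is then immediate: given a $\mathrm{GL}_n(\cD)$-invariant probability measure $\nu$ on $\cK^n$, let $\xi$ have law $\nu$ and apply the first part to obtain $R \perp \vartheta$ with $R \sim \pi := \mathrm{Law}(R)$, $\vartheta \sim \sigma_n$, and $\xi = R\vartheta$.  Then $\nu = \mathrm{Law}(\xi)$ is the push-forward of $\pi \otimes \sigma_n$ under $(r,x) \mapsto rx$.

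I do not foresee a substantive obstacle; the discreteness of $\{|x| : x \in \cK\}$ makes the polar map transparent, and the one nontrivial step—the $\mathrm{GL}_n(\cD)$-invariance of $\nu_m$—is short and reduces the problem to Proposition~\ref{P:invariant}.  The mildest subtlety is the extension of $(\Omega,\cF,\bP)$ on $\{\xi = 0\}$, which is a routine product-space construction rather than a genuine difficulty.
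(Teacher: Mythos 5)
Your proposal is correct and follows essentially the same route as the paper: a polar decomposition with $R$ recording the norm and $\vartheta = \rho^{m}\xi$ the direction, an extension of the probability space (via a product with $\sigma_n$) to handle the atom at $0$, and the key observation that the conditional law of $\vartheta$ given each value of $R$ is $\mathrm{GL}_n(\cD)$-invariant and hence equals $\sigma_n$ by the uniqueness in Proposition~\ref{P:invariant}. No substantive differences.
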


\begin{proof}
Define events $A_m$, $m \in \Z$, by $A_m := \{\omega \in \Omega: \norm{\xi(\omega)} = q^{-m}\}$ and set
$A_\infty := \{\omega \in \Omega: \norm{\xi(\omega)} = 0\}$.  Let $(\bar \Omega, \bar{\cF})$ be $\Omega \times \cS^{(n)}$
equipped with the product of the $\sigma$-field $\cF$ and the Borel $\sigma$-field on $\cS^{(n)}$. 
 Let $\bar \bP:= \bP \otimes \sigma_n$.  Extend the definition of $\xi$ to $\bar \Omega$ by, with the usual abuse of notation,
taking $\xi(\omega, x)$ to be $\xi(\omega)$.  

Set $R := \rho^m$ on the event $A_m \times \cS^{(n)}$, $m \in \Z$,  and $R := 0$ on the event $A_\infty \times \cS^{(n)}$.  Put
\[
\vartheta(\omega,x) :=
\begin{cases}
\rho^{-m} \xi(\omega,x),&  \omega \in A_m, \, m \in \Z, \\
x,& \omega \in A_\infty. \\
\end{cases}
\]
It is clear that $\xi = R \vartheta$.  For any $r \in \{\rho^m : m \in \Z\} \cup \{0\}$ the
conditional distribution of $\vartheta$ given the event $\{R = r\}$ is obviously
invariant under the action of $\mathrm{GL}_n(\cD)$ and so, by Proposition~\ref{P:invariant}, the random variable
$\vartheta$ is independent of $R$ with distribution $\sigma_n$.
\end{proof}

A classical theorem often attributed to Poincar\'e says that for fixed $k  \in \N$ the distribution of the first $k$ coordinates of a point uniformly distributed over the sphere of radius $\sqrt{n}$ in $\R^n$ converges to the distribution  of a vector of $k$ independent standard Gaussian variables as $n \to \infty$.
See Section~6 of \cite{MR898502} for a discussion of the history of this result leading to the conclusion that a more appropriate attribution is to the work of Borel in \cite{Borel_06} (see also Chapter V of \cite{Borel_14}).
It is shown in \cite{MR898502} that indeed the total variation distance between the distribution of the first $k$ coordinates of a point uniformly distributed over the sphere of radius $\sqrt{n}$ in $\R^n$ and the distribution  of a vector of $k$ independent standard Gaussian variables converges to zero as $n \to \infty$ provided that $k = o(n)$.

The analogue of such a result in the local field setting is the following.  Note that the relevant total variation distance converges to zero as $n \to \infty$ regardless of the relative size of $k$ with respect to $n$.

\begin{theorem}
\label{T:total_variation}
For $n \in \N$ and $k \in [n]$, let $\sigma_{n,k}$ be the push-forward of $\sigma_n$ by the map that sends $(x_i)_{i \in [n]} \in \cK^n$ to $(x_i)_{i \in [k]} \in \cK^k$.  The total variation distance between  the probability measures $\sigma_{n,k}$ and $\gamma_k$ is ${q^{-n}(1-q^{-k})}/(1-q^{-n})$.
\end{theorem}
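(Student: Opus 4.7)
The plan is to compute the Radon--Nikodym derivative of $\sigma_{n,k}$ with respect to $\gamma_k$ explicitly, observe that it is a piecewise constant function taking only two values, and then read off the total variation distance.

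First I would unpack the definitions. Since $\sigma_n = \gamma_n(\cdot \cap \cS^{(n)})/(1-q^{-n})$, for any Borel set $A \subseteq \cD^k$ we have
\begin{equation*}
\sigma_{n,k}(A) = \frac{\gamma_n\bigl((A \times \cD^{n-k}) \setminus (\rho\cD)^n\bigr)}{1-q^{-n}}.
\end{equation*}
Splitting $A$ into $A_1 := A \setminus (\rho\cD)^k$ and $A_2 := A \cap (\rho\cD)^k$, the set $A_1 \times \cD^{n-k}$ is already disjoint from $(\rho\cD)^n$, while $(A_2 \times \cD^{n-k}) \cap (\rho\cD)^n = A_2 \times (\rho\cD)^{n-k}$. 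Using $\gamma_n = \gamma_k \otimes \gamma_{n-k}$ and $\gamma_{n-k}((\rho\cD)^{n-k}) = q^{-(n-k)}$, this gives
\begin{equation*}
\sigma_{n,k}(A) \;=\; \frac{\gamma_k(A_1) + (1-q^{-(n-k)})\gamma_k(A_2)}{1-q^{-n}}.
\end{equation*}

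Next I would recognize this as asserting that $\sigma_{n,k}$ is absolutely continuous with respect to $\gamma_k$ with density
\begin{equation*}
f_{n,k}(x) \;=\; \frac{1}{1-q^{-n}}\mathbf{1}_{\cS^{(k)}}(x) \;+\; \frac{1-q^{-(n-k)}}{1-q^{-n}}\mathbf{1}_{(\rho\cD)^k}(x),
\end{equation*}
which is a sanity-checkable partition of $\cD^k$ into the two sets $\cS^{(k)}$ and $(\rho\cD)^k$. A direct computation shows $f_{n,k} - 1$ is positive on $\cS^{(k)}$ and negative on $(\rho\cD)^k$, with
\begin{equation*}
f_{n,k}(x) - 1 \;=\; \frac{q^{-n}}{1-q^{-n}} \text{ on } \cS^{(k)}, \qquad f_{n,k}(x) - 1 \;=\; -\,\frac{q^{-n}(q^k-1)}{1-q^{-n}} \text{ on } (\rho\cD)^k.
\end{equation*}

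Finally, using the standard identity $d_{\mathrm{TV}}(\sigma_{n,k},\gamma_k) = \int (f_{n,k}-1)^+\,d\gamma_k$ together with $\gamma_k(\cS^{(k)}) = 1-q^{-k}$, I would obtain
\begin{equation*}
d_{\mathrm{TV}}(\sigma_{n,k},\gamma_k) \;=\; (1-q^{-k}) \cdot \frac{q^{-n}}{1-q^{-n}} \;=\; \frac{q^{-n}(1-q^{-k})}{1-q^{-n}},
\end{equation*}
and I would cross-check by evaluating the negative-part integral $q^{-k}\cdot q^{-n}(q^k-1)/(1-q^{-n})$, which gives the same value.

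There is no real obstacle here beyond careful bookkeeping; the main conceptual step is the decomposition of $\cD^k$ into the two $\mathrm{GL}_k(\cD)$-orbits $\cS^{(k)}$ and $(\rho\cD)^k$, on each of which $f_{n,k}$ must be constant by the $\mathrm{GL}_n(\cD)$-invariance of $\sigma_n$ applied through its $k \times (n-k)$ block structure.
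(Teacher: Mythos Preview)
Your proof is correct and takes a genuinely different route from the paper's. The paper argues probabilistically: it introduces indicator variables $\delta_{n,i} = \ind\{|\xi_{n,i}|=1\}$ and $\epsilon_i = \ind\{|\eta_i|=1\}$, observes that the conditional laws of $(\xi_{n,i})_{i\in[k]}$ and $(\eta_i)_{i\in[k]}$ given these indicators coincide, and thereby reduces the problem first to the total variation distance between the laws of $(\delta_{n,i})_{i\in[k]}$ and $(\epsilon_i)_{i\in[k]}$, and then (by a further sufficiency reduction) to the distance between the laws of the sums $\sum_{i\in[k]}\delta_{n,i}$ and $\sum_{i\in[k]}\epsilon_i$; the latter is computed as the distance between a binomial distribution and its conditioning on a larger binomial being nonzero. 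Your approach bypasses all of this by writing down the Radon--Nikodym derivative $d\sigma_{n,k}/d\gamma_k$ directly from the product structure of $\gamma_n$, which is shorter and more transparent. The paper's argument, on the other hand, makes the sufficiency structure explicit and connects more visibly to the coupling interpretation of total variation.

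One small correction to your closing remark: $(\rho\cD)^k$ is not a single $\mathrm{GL}_k(\cD)$-orbit---it is the union of the spheres $\{\|x\|=q^{-m}\}$ for $m\ge 1$ together with $\{0\}$. The density $f_{n,k}$ happens to be constant on all of $(\rho\cD)^k$ not for orbit reasons but because of your direct computation (or, if you like, because the remaining $n-k$ coordinates see the same conditioning event regardless of which smaller sphere the first $k$ coordinates lie on). This does not affect the validity of your argument.
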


\begin{proof}
Let $(\xi_{n,i})_{i \in [n]}$ have distribution $\sigma_n$ and let $(\eta_i)_{i \in [n]}$ have distribution $\gamma_n$ so that $(\xi_{n,i})_{i \in [k]}$ has distribution $\sigma_{n,k}$ and $(\eta_i)_{i \in [k]}$
has distribution $\gamma_k$.

By definition, the distribution of $(\xi_{n,i})_{i \in [n]}$ is the same as the distribution of $(\eta_i)_{i \in [n]}$ conditioned on the event $\{\norm{(\eta_i)_{i \in [n]}} = 1\}$. Write $\delta_{n,i}$ for the indicator of the event $\brac{\abs{\xi_{n,i}} = 1}$ and $\epsilon_i$ for the indicator of the event $\brac{\abs{\eta_i} = 1}$. Let $\nu_0$ (resp. $\nu_1$) be $\gamma$ conditioned on the event $\brac{x \in \cK : \abs{x} < 1}$ (resp. $\brac{x \in \cK : \abs{x} = 1}$); that is, $\nu_0$ (resp. $\nu_1$) is the conditional distribution of $\eta_i$ given the event $\brac{\abs{\eta_i} < 1}$ (resp. $\brac{\abs{\eta_i} = 1}$).

If $(e_i)_{i \in [n]} \in \brac{0,1}^n \neq 0$, then the conditional distribution of $(\xi_{n,i})_{i \in [k]}$ given the event $\brac{(\delta_{n,i})_{i \in [k]}= (e_i)_{i \in [k]}}$ is the same as that of  $(\eta_i)_{i \in [k]}$ given the event $\{(\epsilon_k)_{i \in [k]} = (e_i)_{i \in [k]}\}$; both conditional distributions are $\bigotimes_{i \in [k]} \nu_{e_i}$.  It follows that the total variation distance we seek is the same as the total variation distance between the distribution of $(\delta_{n,i})_{i \in [k]}$ and the distribution of $(\epsilon_i)_{i \in [k]}$.  Furthermore, the conditional distribution of $(\delta_{n,i})_{i \in [k]}$ given the event $\{\sum_{i \in [k]} \delta_{n,i} = j\}$ is the same as the conditional distribution of $(\epsilon_i)_{i \in [k]}$ given the event $\{\sum_{i \in [k]} \epsilon_i = j\}$.  Thus, it further suffices to compute the total variation distance between the distribution of  $\sum_{i \in [k]} \delta_{n,i}$ and the distribution of  $\sum_{i \in [k]} \epsilon_i$.

Put $X = \sum_{i \in [n]} \epsilon_i$ and $Y = \sum_{i \in [k]} \epsilon_i$. Noting that the distribution of $\sum_{i \in [k]} \delta_{n,i}$ is the same as the conditional distribution of $Y$ given the event $\{X \ne 0\}$, the
total variation distance we seek is
\[
\frac{1}{2} \sum_{y=0}^k |\bP\{Y=y\} - \bP\{Y=y \, | \, X \ne 0\} |.
\]
For $y \neq 0$ we have
\[
\bP\{Y=y \, | \, X \ne 0\} 
= \frac{\bP\{Y=y, \, X \ne 0\}}{\bP\{X \ne 0\}}
= \frac{\bP\{Y=y\}}{\bP\{X \ne 0\}},
\]
and so
\[
\begin{split}
\sum_{y=1}^k |\bP\{Y=y\} - \bP\{Y=y \, | \, X \ne 0\} |
& = \left(\frac{1}{1 - q^{-n}} - 1\right) \sum_{y=1}^k\bP\{Y=y\} \\
& = \left(\frac{1}{1 - q^{-n}} - 1\right) (1 - \bP\{Y=0\}) \\
& = \frac{q^{-n}}{1 - q^{-n}}(1-q^{-k}). \\
\end{split}
\]
Also,
\[
\bP\{Y = 0\} = q^{-k}
\]
and
\[
\bP\{Y = 0 \, | \, X \ne 0\} =  \frac{q^{-k}(1 - q^{-(n-k)})}{1 - q^{-n}},
\]
so that
\[
\begin{split}
|\bP\{Y=0\} - \bP\{Y=0 \, | \, X \ne 0\} |
& = q^{-k} - \frac{q^{-k}(1 - q^{-(n-k)})}{1 - q^{-n}} \\
& = \frac{q^{-k} (q^{-(n-k)} - q^{-n})}{1 - q^{-n}}. \\
\end{split}
\]
Therefore,
\[
\begin{split}
\frac{1}{2} \sum_{y=0}^k |\bP\{Y=y\} - \bP\{Y=y \, | \, X \ne 0\} |
& = \frac{1}{2} \frac{q^{-n}(1-q^{-k}) + q^{-k} (q^{-(n-k)} - q^{-n})}{1 - q^{-n}} \\
& = \frac{1}{2} \frac{2 q^{-n} - 2 q^{-(n+k)}}{1 - q^{-n}} \\
& = \frac{q^{-n} (1 - q^{-k})}{1 - q^{-n}}, \\
\end{split}
\]
as claimed.
\end{proof}

\subsection{Fourier theory}
\label{S:Fourier}

Recall that a \textit{character} on a locally compact Abelian group $G$ with the group operation written additively is a map $\kappa: G \to \mathbb{T} := \{z \in \C: |z| = 1\}$ such that $\kappa(g+h) = \kappa(g) \kappa(h)$ for all $g,h \in G$.

It is possible to fix a character $\chi$ for $\cK$ such that $\chi$ restricted to the subgroup $\cD$ is trivial (that is, always takes the value $1$) while $\chi$ restricted to the subgroup $\rho^{-1} \cD$ is non-trivial.  
Fixing any such choice of the character $\chi$, an arbitrary character on $\cK$ is of the form $x \mapsto \chi(a x)$ for some $a \in \cK$.  More generally, an arbitrary character on $\cK^n$ is of the form $x \mapsto \chi(a \cdot x)$ for some $a \in \cK^n$, where $a \cdot x$ is the usual dot product of the vectors $a$ and $x$.

A probability measure $\mu$ on $\cK^n$ has a Fourier transform $\hat \mu(t) := \int \chi(t \cdot x) \, \mu(dx)$, $t \in \cK^n$.  For example,  $\hat \gamma_n = \ind_{\cD^n}$.

\subsection{Gaussian random variables}
\label{S:Gaussian}

For an introduction to the/an analogue of Gaussian probability measures on local fields and the proofs of any results stated without proof in this subsection, see  \cite{MR990478, MR1832433}.

The following definition mimics a standard definition/characterization of Gaussian random variables taking values in a Banach spaces over the real numbers.

\begin{definition}
Let $\cE$ be a separable $\cK$-Banach, and let $\xi$ be an $\cE$-valued random variable. 
The random variable $\xi$ is \textit{$\cK$-Gaussian} if whenever $\xi_1$ and $\xi_2$ are independent copies of $\xi$ and $(\alpha_{1,1},\alpha_{1,2}),(\alpha_{2,1},\alpha_{2,2}) \in \cK^2$ are $\cK$-orthonormal, then $(\xi_1,\xi_2)$ has the same distribution as $(\alpha_{1,1}\xi_1 + \alpha_{1,2}\xi_2, \alpha_{2,1}\xi_1 + \alpha_{2,2}\xi_2)$.
\end{definition}

\begin{theorem}
\label{T:Gaussian}
Let $\cE$ be a separable $\cK$-Banach space and let $\xi$ be an $\cE$-valued random variable.
\begin{itemize}
\item[(i)]
The random variable $\xi$ is $\cK$-Gaussian if and only if the distribution of $\xi$ is Haar measure on some compact $\cD$-module of $\cE$.  In particular, if $\cE = \cK$, then $\xi$ is $\cK$-Gaussian if and only if the distribution of $\xi$ is either the point mass at $0$ or the restriction of the Haar measure $\lambda$ to one of the sets $\rho^m \cD$, $m \in \Z$, normalized to be a probability measure.
\item[(ii)]
The random variable $\xi$ is $\cK$-Gaussian if and only if the $\cK$-valued random variable $T \xi$ is  $\cK$-Gaussian for all
$T \in \cE^*$, where  $\cE^*$ is the dual space of continuous, linear maps from $\cE$ to $\cK$.
\item[(iii)]
If $\xi$ is $\cK$-Gaussian, then the compact $\cD$-module in (i) is the set
\[
\brac{x \in \cE : |T x| \leq \|T \xi\|_\infty \quad \forall T \in \cE^*},
\]
where $\|\zeta\|_\infty := \ess \sup\{|\zeta(\omega)| : \omega \in \Omega\}$ for a $\cK$-valued random variable $\zeta$.
\item[(iv)]
The random variable $\xi$ is $\cK$-Gaussian if and only if $\|T \xi\|_\infty$ is finite for all $T \in \cE^*$ and, for all $t \in \cK$,
\[
\bP[\chi(t T \xi)] = 
\begin{cases}
1,& |t| \|T \xi \|_\infty \le 1, \\
0,& \text{otherwise}. \\
\end{cases}
\]
\end{itemize}
\end{theorem}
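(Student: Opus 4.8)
The plan is to recast the defining property as invariance under the compact group $\mathrm{GL}_2(\cD)$ and then to argue with characteristic functionals. By Theorem~\ref{T:orthogonal_matrix_equivalents}(iv), two vectors $(\alpha_{1,1},\alpha_{1,2})$ and $(\alpha_{2,1},\alpha_{2,2})$ in $\cK^2$ are $\cK$-orthonormal exactly when the matrix $(\alpha_{i,j})$ lies in $\mathrm{GL}_2(\cD)$; hence $\xi$ is $\cK$-Gaussian if and only if, for independent copies $\xi_1,\xi_2$ of $\xi$, the pair $(\xi_1,\xi_2)$ has the same law as $U(\xi_1,\xi_2)$ for every $U\in\mathrm{GL}_2(\cD)$. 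I would prove statement (i) first, by a Fourier/support argument that treats all separable $\cE$ at once, and then read off (ii), (iii) and (iv); the case $\cE=\cK$ of (i) will be used along the way.

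\emph{Forward implication of (i).} Write $\mu$ for the law of $\xi$ and let $\hat\mu(T):=\bE[\chi(T\xi)]$, $T\in\cE^*$. Taking $U$ to be the matrix with rows $(1,1)$ and $(1,0)$ --- which is $\cK$-orthonormal, having determinant $-1$ and entries in $\cD$ --- gives $\xi\stackrel{d}{=}\xi_1+\xi_2$, so $\hat\mu=\hat\mu^2$; thus $\hat\mu$ is $\{0,1\}$-valued, in particular real, so $\mu$ is symmetric, and since $\widehat{\mu*\mu}=\hat\mu^2=\hat\mu$ we get $\mu*\mu=\mu$ by Fourier uniqueness (the characters $x\mapsto\chi(Tx)$ separate points of the separable space $\cE$). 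An idempotent symmetric probability measure has $M:=\operatorname{supp}\mu$ equal to a closed symmetric sub-semigroup of $\cE$, hence a closed subgroup; moreover $\mu$ is invariant under translation by $M$, and an invariant Borel probability measure of full support on a Polish group forces that group to be totally bounded, so $M$ is compact and $\mu$ is its normalized Haar measure. Finally, taking $U=\operatorname{diag}(u,1)$ for a unit $u\in\cD$ gives $\xi\stackrel{d}{=}u\xi$, and taking $U$ with rows $(1,\rho)$ and $(0,1)$ gives $\xi\stackrel{d}{=}\xi_1+\rho\xi_2$, so $uM=M$ and $\rho M\subseteq M$; with $0\in M$ and additivity this makes $M$ a compact $\cD$-submodule. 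For $\cE=\cK$ the compact $\cD$-submodules are exactly $\{0\}$ and the sets $\rho^m\cD$, $m\in\Z$, which is the explicit statement in (i).

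\emph{Converse of (i), and (iii).} If $\mu$ is the normalized Haar measure on a compact $\cD$-submodule $M\subseteq\cE$, then $(\xi_1,\xi_2)$ is Haar-distributed on $M\times M$, and every $U\in\mathrm{GL}_2(\cD)$ has entries in $\cD$ (as does $U^{-1}$), so $U$ maps $M\times M$ onto itself and restricts to a topological automorphism of it, preserving Haar measure --- so $\xi$ is $\cK$-Gaussian. For (iii): if $\xi$ is $\cK$-Gaussian with law $\mu$ the Haar measure on $M$, then pushing $\mu$ forward by $T\in\cE^*$ shows the law of $T\xi$ is Haar measure on the $\cD$-submodule $T(M)$ of $\cK$, so $\|T\xi\|_\infty=\sup_{x\in M}|Tx|<\infty$ and $M\subseteq H:=\{x\in\cE:|Tx|\le\|T\xi\|_\infty\ \forall\,T\in\cE^*\}$; the reverse inclusion is a separation statement proved with the non-Archimedean Hahn--Banach theorem applied to a point of $H\setminus M$, distinguishing whether it lies in the closed linear span of $M$.

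\emph{Statements (iv) and (ii), and the main obstacle.} If $\xi$ is $\cK$-Gaussian and $T\in\cE^*$, then applying $T$ to each coordinate of $(\xi_1,\xi_2)\stackrel{d}{=}U(\xi_1,\xi_2)$ and using $\cK$-linearity of $T$ shows $T\xi$ is $\cK$-Gaussian (the forward half of (ii)); by the case $\cE=\cK$ of (i), the law of $T\xi$ is Haar measure on $\rho^m\cD$ with $q^{-m}=\|T\xi\|_\infty<\infty$, whose Fourier transform is $t\mapsto\ind\{|t|\,\|T\xi\|_\infty\le1\}$ --- the formula in (iv). Conversely, if $T\xi$ is $\cK$-Gaussian for every $T\in\cE^*$ (equivalently, the formula in (iv) holds for all $T$), then $\hat\mu=\hat\mu^2$ and $\hat\mu$ is even in $T$, so, rerunning the argument of the second paragraph, $\mu$ is the normalized Haar measure of a compact subgroup $M$, which is a $\cD$-submodule because applying the formula with $aT$ for $a\in\cD$ and using $|a|\,\|T\xi\|_\infty\le\|T\xi\|_\infty$ forces $aM\subseteq M$; by the converse of (i), $\xi$ is then $\cK$-Gaussian, and this also gives the converse half of (ii). The step I expect to be the main obstacle is the infinite-dimensional part of the forward direction of (i) --- showing that an idempotent probability measure on the non-locally-compact Polish group $\cE$ is invariant under translation by its support and has compact support; this is routine Pontryagin duality when $\cE=\cK^n$ but in general requires the separability of $\cE$ and a reduction to countably many characters --- along with the Hahn--Banach separation behind (iii), where one must bear in mind that the linear span of a compact $\cD$-submodule need not be closed.
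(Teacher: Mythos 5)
The paper does not actually prove Theorem~\ref{T:Gaussian}: it is quoted from the cited references [Eva89, Eva01], so there is no in-text proof to compare against line by line. That said, your outline is essentially the argument used in those sources for the scalar case, and it is sound. The reduction of the definition to $\mathrm{GL}_2(\cD)$-invariance via Theorem~\ref{T:orthogonal_matrix_equivalents}(iv) is correct; the choice of the matrices with rows $(1,1),(1,0)$, then $\mathrm{diag}(u,1)$, then $(1,\rho),(0,1)$ correctly yields idempotence $\mu\ast\mu=\mu$, unit-invariance, and $\rho$-stability of the support, whence a compact $\cD$-submodule carrying Haar measure; and your derivations of (ii), (iii), (iv) from (i) are the standard ones (the Fourier transform of normalized Haar measure on $\rho^m\cD$ being $\ind\{|t|q^{-m}\le 1\}$ gives (iv) immediately). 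The places where your sketch leans on facts you should make explicit are exactly the ones you flag: (a) that probability measures on a separable $\cK$-Banach space are determined by the characters $x\mapsto\chi(Tx)$, $T\in\cE^*$ (this uses Ingleton's non-Archimedean Hahn--Banach theorem, valid because local fields are spherically complete, plus the standard Borel-generation argument on a Polish space); (b) the idempotent-measure theorem on the non-locally-compact group $\cE$ --- the clean route is the Fourier one: $\hat\mu(T)=1$ forces $\chi(Tx)=1$ on $\operatorname{supp}\mu$, giving translation invariance of $\mu$ under its support, hence compactness; and (c) the reverse inclusion in (iii), which is most easily done not by a bare Hahn--Banach separation but by invoking the structure theorem for compact $\cD$-submodules (they admit an orthogonal coordinatization as products $\prod_k \rho^{n_k}\cD$ with $n_k\to\infty$), after which coordinate functionals separate. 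The cited references handle the infinite-dimensional case slightly differently --- they establish the reduction (ii) to one-dimensional marginals first and then recover the compact module by a tightness argument --- but your direct route through the idempotent law on $\cE$ reaches the same place.
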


\begin{definition}
A $\cK$-valued random variable is \textit{standard} $\cK$-Gaussian if its distribution is $\gamma$.
\end{definition}

\section{Rotatable random sequences}
\label{S:Freedman}

The following analogue of Theorem~2 in \cite{MR898502} follows from a combination of Corollary~\ref{C:polar_decomposition} and Theorem~\ref{T:total_variation}.

\begin{proposition}
\label{P:finite_Freedman}
Suppose that $\nu$ is a $\mathrm{GL}_n(\cD)$-invariant probability measure on $\cK^n$ so that $\nu$ is the push-forward of $\pi \otimes \sigma_n$ by the map $(r,x) \mapsto r x$ for some probability measure $\pi$ on $\{\rho^m : m \in \Z\} \cup \{0\}$. The total variation distance between $\nu$ and the push-forward of $\pi \otimes \gamma_n$ by the map $(r,x) \mapsto r x$ is at most $q^{-n}$.
\end{proposition}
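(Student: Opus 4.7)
The claim has two parts: (a) the representation $\nu = T_*(\pi \otimes \sigma_n)$, where $T(r,x) := r x$, and (b) the total variation bound $q^{-n}$ between $\nu$ and $T_*(\pi \otimes \gamma_n)$. Part (a) is nothing more than the second conclusion of Corollary~\ref{C:polar_decomposition} applied to a $\cK^n$-valued random variable with distribution $\nu$, so the only real work lies in part (b).

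My plan for (b) is to leverage Theorem~\ref{T:total_variation} with $k = n$. Taking $k = n$ in that statement gives
\[
d_{TV}(\sigma_n, \gamma_n) = \frac{q^{-n}(1-q^{-n})}{1-q^{-n}} = q^{-n},
\]
since $\sigma_{n,n} = \sigma_n$. The rest is general nonsense: I combine this with two standard properties of total variation, namely that TV is nonexpansive under push-forward (so $d_{TV}(T_* \mu, T_* \mu') \le d_{TV}(\mu,\mu')$ for any measurable $T$) and that coupling the $r$-coordinate gives $d_{TV}(\pi \otimes \mu, \pi \otimes \mu') \le d_{TV}(\mu, \mu')$.

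Concretely, for any Borel $A \subseteq \cK^n$,
\[
|T_*(\pi \otimes \sigma_n)(A) - T_*(\pi \otimes \gamma_n)(A)|
= \left| \int_{\{\rho^m\} \cup \{0\}} \bigl(\sigma_n(r^{-1}A) - \gamma_n(r^{-1}A)\bigr) \, \pi(dr) \right|,
\]
where $r^{-1}A$ is interpreted as $A$ itself when $r = 0$ (both integrands then vanish since the inner factor is $\ind_A(0) - \ind_A(0) = 0$). Taking absolute values inside the integral and using that $\sigma_n(r^{-1}A) - \gamma_n(r^{-1}A)$ is bounded in absolute value by $d_{TV}(\sigma_n, \gamma_n)$ (which is invariant under the multiplication map $x \mapsto r x$ for $r \in \{\rho^m : m \in \Z\}$ since both $\sigma_n$ and $\gamma_n$ are pushed to themselves along such scalings up to equality of their distance), one obtains the bound $q^{-n}$.

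I expect the only mildly delicate point to be handling the atom at $r = 0$ and the scaling invariance on the non-zero atoms; for the latter, note that for $r = \rho^m$ the map $x \mapsto r x$ is a bijection of $\cK^n$ carrying $\gamma_n$ to $|r|^{-n} \gamma_n$ restricted to $r\cD^n$, but we do not need to rescale the measures themselves—only to observe that $|\sigma_n(r^{-1}A) - \gamma_n(r^{-1}A)| \le d_{TV}(\sigma_n, \gamma_n)$ holds pointwise in $r$. Integrating against $\pi$ and taking the supremum over $A$ then gives $d_{TV}(\nu, T_*(\pi \otimes \gamma_n)) \le d_{TV}(\sigma_n, \gamma_n) = q^{-n}$, as required.
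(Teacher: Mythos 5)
Your proposal is correct and follows exactly the route the paper intends: the paper offers no written proof, merely noting that the proposition "follows from a combination of Corollary~\ref{C:polar_decomposition} and Theorem~\ref{T:total_variation}," which is precisely your argument (the polar decomposition for the representation, the $k=n$ case of the total variation theorem giving $d_{TV}(\sigma_n,\gamma_n)=q^{-n}$, and contractivity of total variation under the push-forward $(r,x)\mapsto rx$ with the $r$-marginal coupled). The parenthetical about scaling invariance is unnecessary --- for $r\neq 0$ the set $r^{-1}A$ is simply some Borel set, so $|\sigma_n(r^{-1}A)-\gamma_n(r^{-1}A)|\le q^{-n}$ with no further comment needed --- but this does not affect correctness.
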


\begin{definition}
An infinite sequence $(\xi_i)_{i \in \N}$ of $\cK$-valued random variables is \textit{rotatable} if $U (\xi_i)_{i \in [n]}$ has the same distribution as $(\xi_i)_{i \in [n]}$ for every $n \in \N$ and $U \in \mathrm{GL}_n(\cD)$.
\end{definition}

\begin{theorem}
\label{T:Freedman}
A $\cK$-valued random infinite sequence $\xi = (\xi_i)_{i \in \N}$ is rotatable if and only if 
$\sup_{i \in \N} \abs{\xi_i}$ is almost surely finite and
(possibly on some extension of $(\Omega, \cF, \bP)$) 
$\xi_i = \tau \eta_i$, $i \in \N$, where
\begin{itemize}
\item
$\eta_1,\eta_2,\dots$ are independent, identically distributed, standard $\cK$-Gaussian random variables, 
\item
$\tau$ is the random variable taking values in $\brac{\rho^m : m \in \Z} \cup \{0\}$ given by
\[
\tau := \begin{cases}
\rho^m, &\sup_j \abs{\xi_j} = q^{-m}, \; m \in \Z, \\
0, &\sup_j \abs{\xi_j} = 0.
\end{cases}
\]
\item
the random variable $\tau$ is independent of the sequence $\eta = (\eta_i)_{i \in \N}$.
\end{itemize}
\end{theorem}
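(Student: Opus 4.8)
The plan is to prove the two implications separately; the ``if'' direction is routine, while the converse is where the finite-dimensional results proved above do their work. For the ``if'' direction, assume $\xi_i = \tau \eta_i$ as described. Since each $\eta_i$ lies in $\cD$ we have $|\xi_i| \le |\tau|$, so $\sup_i |\xi_i| \le |\tau| < \infty$ almost surely; moreover $\bP\{|\eta_i| = 1\} = 1 - q^{-1} > 0$, so the Borel--Cantelli lemma gives $\sup_i |\eta_i| = 1$ almost surely and hence $\sup_j |\xi_j| = |\tau|$, which makes the displayed formula for $\tau$ self-consistent. For rotatability I would fix $n$ and $U \in \mathrm{GL}_n(\cD)$ and condition on $\tau = r$: then $(\xi_i)_{i \in [n]} = r(\eta_i)_{i \in [n]}$ has law equal to the pushforward of $\gamma_n$ under multiplication by the scalar $r$, which is $U$-invariant because $U$ commutes with scalar multiplication and $\gamma_n$ is $\mathrm{GL}_n(\cD)$-invariant; averaging over $\tau$ finishes this direction.

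\emph{The ``only if'' direction.} Suppose $\xi$ is rotatable. Permutation matrices lie in $\mathrm{GL}_n(\cD)$, so $\xi$ is exchangeable. For each $n$, the law of $(\xi_i)_{i \in [n]}$ is $\mathrm{GL}_n(\cD)$-invariant, so Corollary~\ref{C:polar_decomposition} provides a polar decomposition $(\xi_i)_{i \in [n]} = R_n \vartheta_n$ with $\vartheta_n \sim \sigma_n$ independent of the radial part $R_n$, which takes values in $\{\rho^m : m \in \Z\} \cup \{0\}$ and is a deterministic function of $\norm{(\xi_i)_{i \in [n]}}$. Writing $\pi_n$ for the law of $R_n$, Proposition~\ref{P:finite_Freedman} says that for every fixed $k \le n$ the law of $(\xi_i)_{i \in [k]}$ is within total variation distance $q^{-n}$ of the law of $R_n \eta^{(n)}$, where $\eta^{(n)}$ is i.i.d.\ $\gamma$-distributed and independent of $R_n$. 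The plan is then to let $n \to \infty$.

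\emph{Passing to the limit.} Since $\norm{(\xi_i)_{i \in [n]}} = |R_n|$ is non-decreasing in $n$, it converges almost surely to $M := \sup_i |\xi_i| \in [0,\infty]$. Comparing one-dimensional marginals shows that the law of $R_n \eta^{(n)}_1$ converges in total variation; since recovering the law of $|R_n|$ from the law of $|R_n \eta^{(n)}_1|$ is an explicit invertible (triangular) substitution on $\{q^{-m} : m \in \Z\}$, the measures $\pi_n$ converge to a probability measure $\pi$ putting no mass at infinity, which forces $M < \infty$ almost surely and $R_n \to \tau$ almost surely, with $\tau$ the random variable in the statement and $\pi$ its law. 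Letting $n \to \infty$ in the total variation bound --- the left side being the fixed measure $\mathrm{law}((\xi_i)_{i \in [k]})$, the right side converging weakly to the pushforward of $\pi \otimes \gamma_k$ under $(r,x) \mapsto rx$ --- identifies $\mathrm{law}((\xi_i)_{i \in [k]})$ with that pushforward for every $k$, so by Kolmogorov's extension theorem $\mathrm{law}(\xi)$ is the pushforward of $\pi \otimes \gamma^{\otimes \N}$ under $(r, (x_i)_{i \in \N}) \mapsto (r x_i)_{i \in \N}$. Realizing this on the extension $\Omega \times \cD^{\N}$ equipped with $\bP \otimes \gamma^{\otimes \N}$, and setting $\eta_i := \xi_i / \tau$ on $\{\tau \ne 0\}$ and $\eta_i$ equal to the $i$-th extra coordinate on $\{\tau = 0\}$, one obtains $\xi_i = \tau \eta_i$ with $(\eta_i)_{i \in \N}$ conditionally (given $\tau$) always i.i.d.\ $\gamma$-distributed --- hence i.i.d.\ $\gamma$-distributed, so standard $\cK$-Gaussian by definition, and independent of $\tau$ --- and with $\tau$ identified as in the ``if'' direction.

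\emph{Main obstacle.} I expect the crux to be this passage to the limit: proving $M < \infty$ almost surely (equivalently, that the radial laws $\pi_n$ do not leak mass to infinity) and verifying carefully that the estimate of Proposition~\ref{P:finite_Freedman}, whose error vanishes but is nonzero for each finite $n$, really upgrades to an exact identity of laws for the infinite sequence rather than merely an approximate one; keeping the polar decompositions compatible across $n$, or else arguing purely at the level of laws and invoking Kolmogorov's theorem only at the end, is where the bookkeeping needs care. An alternative route would transfer the Gaussian structure through de~Finetti's theorem: the conditional law of $\xi$ given its tail $\sigma$-field is invariant under every transformation that acts on finitely many coordinates by a matrix in $\mathrm{GL}_n(\cD)$ and fixes the rest, and by the $2 \times 2$ case this says exactly that the de~Finetti directing measure is $\cK$-Gaussian, after which Theorem~\ref{T:Gaussian}(i) determines its form.
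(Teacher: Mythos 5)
Your proposal is correct and follows essentially the same route as the paper: the finite-dimensional polar decomposition from Corollary~\ref{C:polar_decomposition}, the total variation bound of Theorem~\ref{T:total_variation}/Proposition~\ref{P:finite_Freedman}, a tightness argument for the radial parts $R_n$ driven by the one-dimensional marginal of $\xi$ (your ``triangular substitution'' is just a repackaging of the paper's computation showing $\bP\{\sup_n|\tau_n|=\infty\}=0$), and passage to the limit. The only cosmetic difference is at the end, where the paper invokes Kallenberg's transfer lemma to upgrade the distributional identity $\xi\overset{d}{=}\tau\tilde\eta$ to an almost sure one, while you build the extension explicitly by dividing by $\tau$ on $\{\tau\ne 0\}$ and adjoining fresh coordinates on $\{\tau=0\}$.
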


\begin{proof}
For each $n \in \N$, let
$$\tau_n := \begin{cases}
\rho^m, &\norm{(\xi_i)_{i \in [n]}} = q^{-m}, \; m \in \Z, \\
0, &\norm{(\xi_i)_{i \in [n]}} = 0,
\end{cases}$$
and let $(\tilde{\xi}_{n,i})_{i \in [n]}$ be distributed according to $\sigma_n$ and independent of $\tau_n$. Observe that, by Corollary~\ref{C:polar_decomposition}, $(\xi_i)_{i \in [n]}$ has the same distribution a $\tau_n (\tilde{\xi}_{n,i})_{i \in [n]}$. Now let $\tilde \eta$ be an infinite sequence of independent, identically distributed, standard $\cK$-Gaussian random variables independent of $(\tau_n)_{n \in \N}$. Writing $\cL(\zeta)$ for the distribution of a random variable $\zeta$ and $\|\cdot - \cdot\|_{\text{TV}}$ for the total variation distance, we have
\begin{align*}
\norm{\cL((\xi_i)_{i \in [k]})- \cL(\tau_n (\tilde{\eta}_i)_{i \in [k]})}_{\text{TV}} 
&= \| \cL(\tau_n (\tilde{\xi}_{n,i})_{i \in [k]}) - \cL(\tau_n(\tilde{\eta}_i)_{i \in [k]}) \|_{\text{TV}} \\
&\leq \| \cL((\tilde{\xi}_{n,i})_{i \in [k]}) - \cL((\tilde{\eta}_i)_{i \in [k]} )\|_{\text{TV}},
\end{align*}
which goes to $0$ as $n \to \infty$ by Theorem~\ref{T:total_variation}. So $\tau_n \tilde{\eta}$ certainly converges in distribution to $\xi$.

Now $\abs{\tau_n} = \norm{(\xi_i)_{i \in [n]}} = \bigvee_{i \in [n]} \abs{\xi_i}$ is increasing with $n \in \N$, and
\begin{align*}
0 &= \inf_{m \in \Z} \bP\{ \abs{\xi_1} > q^m \} \\
&= \inf_{m \in \Z} \lim_{n \in \N} \bP\{ \abs{\tau_n \tilde{\eta}_1} > q^m \} \\
&= \inf_{m \in \Z} \sup_{n \in \N} \bP\{ \abs{\tau_n \tilde{\eta}_1} > q^m \} \\
&= \inf_{m \in \Z} \sup_{n \in \N} \sum_{\ell=0}^{\infty} \bP\{ \abs{\tau_n} > q^{m+\ell} \} \bP \{ \abs{\tilde{\eta}_1} = q^{-\ell} \} \\
&= \sum_{\ell=0}^\infty \inf_{m \in \Z} \sup_{n \in \N} \bP\{ \abs{\tau_n} > q^{m+\ell} \} \bP \{ \abs{\tilde{\eta}_1} = q^{-\ell} \} \\
&= \sum_{\ell=0}^{\infty} \inf_{m \in \Z} \bP\left\{ \sup_{n \in \N} \abs{\tau_n} > q^{m+\ell} \right\} \bP \{ \abs{\tilde{\eta}_1} = q^{-\ell} \} \\
&= \bP \left\{ \sup_{n \in \N} \abs{\tau_n} = \infty \right\},
\end{align*}
so that $\tau_n \to \tau$ almost surely for some random variable $\tau$ taking values in $\brac{\rho^m : m \in \Z} \cup \brac{0}$.

Thus $\xi$ has the same distribution as $\tau \tilde{\eta}$. The almost sure result then follows from the transfer lemma (Corollary 6.11 from \cite{MR1876169}); we thus have random variables $\eta,\breve{\tau}$ such that $(\tau, \tilde{\eta})$ has the same distribution as $(\breve \tau, \eta)$ and $\breve{\tau} \eta = \xi$ almost surely. It remains to observe that
\begin{align*}
\abs{\tau} &= \sup_{n \in \N} \norm{(\xi_i)_{i \in [n]}} \\
&= \sup_{n \in \N} \norm{\breve{\tau}(\eta_i)_{i \in [n]}} \\
&= \sup_{n \in \N} \abs{\breve{\tau}} \norm{(\eta_i)_{i \in [n]}} \\
&= \abs{\breve{\tau}} \sup_{n \in \N} \norm{(\eta_i)_{i \in [n]}} \\
&=\abs{\breve{\tau}}
\end{align*}
almost surely,
so that $\breve{\tau} = \tau$ almost surely.
\end{proof}

There are several extensions and variants of Freedman's theorem in the literature -- see \cite{MR1211786} for a review.  One extension is to consider infinite random sequences where the individual entries take values in $\R^m$ for some $m \in \N$ or, more generally, in some separable Banach space over $\R$ as in \cite{MR520963}.  The analogue of the result in \cite{MR520963} holds in the local field setting, as we now explain.  We say that a random sequence $\xi$ with entries in a separable $\cK$-Banach space $\cE$ is \textit{rotatable} if for any $n \in \N$ and $U \in \mathrm{GL}_n(\cD)$ we have that $(\sum_{j \in [n]} U_{i,j} \xi_j)_{i \in [n]}$ has the same distribution as $(\xi_i)_{i \in [n]}$.  Write $\Gamma(\cE)$ for the set of Gaussian probability measures on $\cE$.  The set $\Gamma(\cE)$ is a closed subset of the Polish space of probability measures on the Polish space $\cE$, where we equip the space of probability measures with the topology of weak converence.  By Theorem~\ref{T:Gaussian}, $\Gamma(\cE)$ is in a bijective correspondence with the set of compact $\cD$-modules in $\cE$.  The analytic content of Theorem~\ref{T:Freedman} is that the distribution of a rotatable sequence of $\cK$-valued random variables is $\int_{\Gamma(\cK)} \mu^{\otimes \N} \, \pi(d\mu)$ for some probability measure on $\Gamma(\cK)$ and it is not too difficult to establish the following generalization that is a local field counterpart to the result in \cite{MR520963}.  We omit the proof.

\begin{theorem}
An infinite sequence $\xi$ of random variables in a separable $\cK$-Banach space $\cE$ is rotatable if and only if the distribution of $\xi$ is $\int_{\Gamma(\cE)} \mu^{\otimes \N} \, \pi(d\mu)$ for some probability measure $\pi$ on $\Gamma(\cE)$.
\end{theorem}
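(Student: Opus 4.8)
The plan is to combine de Finetti's theorem, the scalar result Theorem~\ref{T:Freedman}, and the characterization of Gaussianity in Theorem~\ref{T:Gaussian}(ii). The ``if'' direction is quick: if $\mu \in \Gamma(\cE)$, then by Theorem~\ref{T:Gaussian}(i) $\mu$ is Haar measure on some compact $\cD$-submodule $M \subseteq \cE$, so $\mu^{\otimes n}$ is Haar measure on the compact $\cD$-submodule $M^n \subseteq \cE^n$; since every $U \in \mathrm{GL}_n(\cD)$ has both $U$ and $U^{-1}$ with entries in $\cD$, the map $x \mapsto Ux$ restricts to a bicontinuous group automorphism of $M^n$ and therefore preserves its normalized Haar measure $\mu^{\otimes n}$. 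Hence $\mu^{\otimes n}$, and so $\mu^{\otimes \N}$, is rotatable, and since a mixture of rotatable laws is rotatable, the law $\int_{\Gamma(\cE)} \mu^{\otimes \N}\,\pi(d\mu)$ is rotatable.

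For ``only if'', first note that permutation matrices lie in $\mathrm{GL}_n(\cD)$ by Theorem~\ref{T:orthogonal_matrix_equivalents}(v), so a rotatable sequence $\xi$ is exchangeable; since $\cE$ is Polish, de Finetti's theorem (see, e.g., \cite{MR1876169}) gives $\cL(\xi) = \int_{P(\cE)} \nu^{\otimes \N}\,\Pi(d\nu)$, where $\Pi$ is the law of the directing random measure $M := \lim_n \frac{1}{n}\sum_{i \in [n]} \delta_{\xi_i}$ (the limit existing weakly, almost surely). I would then show that $M \in \Gamma(\cE)$ almost surely and take $\pi := \Pi$. Fix $T \in \cE^*$. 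By $\cK$-linearity of $T$, for every $n$ and $U \in \mathrm{GL}_n(\cD)$ the vector $(\sum_{j \in [n]} U_{i,j}\,T\xi_j)_{i \in [n]}$ equals $(T(\sum_{j \in [n]} U_{i,j}\xi_j))_{i \in [n]}$, which has the same distribution as $(T\xi_i)_{i \in [n]}$; thus $(T\xi_i)_{i \in \N}$ is a rotatable $\cK$-valued sequence. By Theorem~\ref{T:Freedman} --- equivalently, by conditioning on the scale variable $\tau$ there --- its law is a mixture $\int_{\Gamma(\cK)} \mu^{\otimes \N}\,\rho(d\mu)$ of i.i.d.\ sequences of $\cK$-Gaussian variables, so its directing random measure lies in $\Gamma(\cK)$ almost surely. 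On the other hand, $\frac{1}{n}\sum_{i \in [n]} \delta_{T\xi_i} = T_*\bigl(\frac{1}{n}\sum_{i \in [n]} \delta_{\xi_i}\bigr)$ and $\nu \mapsto T_*\nu$ is weakly continuous, so the directing measure of $(T\xi_i)$ equals $T_*M$ almost surely. Hence, for each fixed $T \in \cE^*$, the measure $T_*M$ is almost surely $\cK$-Gaussian.

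The crux is to upgrade this to: almost surely, $T_*M \in \Gamma(\cK)$ for \emph{all} $T \in \cE^*$, whereupon Theorem~\ref{T:Gaussian}(ii) yields $M \in \Gamma(\cE)$. For this I would extract a countable $\cN \subseteq \cE^*$ such that every $T \in \cE^*$ is the pointwise limit on $\cE$ of a sequence from $\cN$: writing $\cE^* = \bigcup_{m \in \Z} B_m$ with $B_m := \{T \in \cE^* : \|T\| \le q^m\}$, each $B_m$ is equicontinuous on the separable space $\cE$ and weak-$*$ compact (local compactness of $\cK$ makes closed balls in $\cK$ compact, so Banach--Alaoglu applies), hence weak-$*$ metrizable and separable; a countable weak-$*$ dense subset $\cN_m \subseteq B_m$ has the desired property, since on the equicontinuous set $B_m$ weak-$*$ convergence coincides with pointwise convergence on all of $\cE$, and one sets $\cN := \bigcup_m \cN_m$. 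Since $\Gamma(\cK)$ --- the point mass at $0$ together with the normalized Haar measures on the balls $\rho^m \cD$, $m \in \Z$ --- is weakly closed in $P(\cK)$ (the only limit points not of this form arise as the radii shrink to $\{0\}$, giving $\delta_0 \in \Gamma(\cK)$), and $S_n \to T$ pointwise on $\cE$ implies $(S_n)_*M \to T_*M$ weakly by bounded convergence, on the almost sure event $\{S_*M \in \Gamma(\cK) \text{ for all } S \in \cN\}$ one gets $T_*M \in \Gamma(\cK)$ for every $T \in \cE^*$, hence $M \in \Gamma(\cE)$, which completes the proof. I expect the main obstacle to be precisely this passage from a countable determining family of functionals to all of $\cE^*$ --- the weak-$*$ separability and equicontinuity bookkeeping, together with the routine but necessary check that $\Gamma(\cK)$ is weakly closed; that $\cE^*$ contains enough functionals for Theorem~\ref{T:Gaussian}(ii) to be applicable ultimately rests on the fact that the locally compact field $\cK$ is spherically complete, so that Hahn--Banach holds.
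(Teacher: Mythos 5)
The paper omits the proof of this theorem, so there is no argument to compare against line by line; your proof is correct and follows exactly the route the authors gesture at (de Finetti, the scalar Theorem~\ref{T:Freedman} applied to $(T\xi_i)_{i \in \N}$ for each $T \in \cE^*$, and then the characterization in Theorem~\ref{T:Gaussian}(ii), with the converse direction reduced to invariance of Haar measure on $M^n$ under $\mathrm{GL}_n(\cD)$). The one genuinely delicate point --- discarding a single null set that works simultaneously for all of the uncountably many $T \in \cE^*$ --- is the right thing to worry about, and your weak-$*$ compactness/separability argument together with the weak closedness of $\Gamma(\cK)$ handles it correctly.
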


Other variants of Freedman's theorem involve considering infinite random sequences $\xi$ such that for all $n \in \N$ the random vector $U (\xi_i)_{i \in [n]}$ has the same distribution as $(\xi_i)_{i \in [n]}$ for all $U \in G_n$, where $G_n$ is some subgroup of the $n \times n$ orthogonal group that contains the subgroup of permutation matrices.  For example, \cite{MR626767} considers the case where $G_n$ is the subgroup that fixes the vector $(1,\ldots,1)^\top$ and shows that in this case $\xi = \alpha + \beta \eta$, where $\eta$ is an independent, identically distributed, standard Gaussian sequence and $(\alpha, \beta)$ is an independent $\R \times \R_+$-valued random vector.  It would be interesting to investigate the counterparts of such results in the local field setting, but we leave this to a future paper.

\section{A local field analogue of Schoenberg's theorem}
\label{S:Schoenberg}

In the case of real-valued rotatable random variables over $\R$, Freedman's theorem is equivalent to a result about nonnegative definite functions (see Theorem 1.31 and Appendix 4 in \cite{MR2161313}) which we recalled in the Introduction. 
Here, we provide an analogue of the latter result in the local field setting.

Recall that a function $\phi$ defined on an Abelian group $G$ is \textit{nonnegative definite} if $\sum_{i,j \in [n]} \phi(g_i - g_j) z_i \bar z_j \ge 0$ for all $g_1, \ldots, g_n \in G$, $z_1, \ldots, z_n \in \C$, and $n \in \N$.

\begin{theorem}
Let $\phi : \{0\} \cup \{q^m : m \in \Z\} \to \R$ be such that $\lim_{m \to \infty} \phi(q^{-m}) = \phi(0) = 1$. For $n \in \N$, define $\phi_n : \cK^n \to \R$ by
\[
\phi_n(x_1,\dots,x_n) := \phi(|x_1| \vee \cdots \vee |x_n|).
\]
Then $\phi_n$ is nonnegative definite for every $n \in \N$ if and only if $\phi$ is nonnegative and nonincreasing.
\end{theorem}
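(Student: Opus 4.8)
The plan is to read both implications through Theorem~\ref{T:Freedman}: nonnegative definiteness of all the $\phi_n$ is, via Bochner's theorem and Kolmogorov's extension theorem, the same as the existence of a rotatable $\cK$-valued sequence whose finite-dimensional characteristic functions are the $\phi_n$, and Theorem~\ref{T:Freedman} then says exactly what such a sequence --- equivalently, $\phi$ --- can be. The Fourier side is governed throughout by the identity $\hat\gamma_n=\ind_{\cD^n}$.

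For the ``if'' direction, suppose $\phi$ is nonnegative and nonincreasing. I would first note that the hypotheses force $\phi(q^k)\in[0,1]$ for every $k\in\Z$ (nonnegativity gives the lower bound, and monotonicity together with $\lim_{m\to\infty}\phi(q^{-m})=1$ gives the upper bound), and that $c:=\lim_{k\to\infty}\phi(q^k)$ exists by monotonicity and is nonnegative. Hence there is a $\brac{\rho^m:m\in\Z}\cup\{0\}$-valued random variable $\tau$ with $\bP\brac{\tau=\rho^k}=\phi(q^k)-\phi(q^{k+1})\ge 0$ and $\bP\brac{\tau=0}=c$; a telescoping sum shows these masses total $1$, that $\tau$ is almost surely finite, and that $\bP\brac{|\tau|\le q^{-k}}=\phi(q^k)$ for all $k$. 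Taking independent standard $\cK$-Gaussian variables $\eta_1,\eta_2,\dots$ independent of $\tau$ and conditioning on $\tau$,
\[
\bE\squa{\chi\paren{t\cdot(\tau\eta_i)_{i\in[n]}}}=\bE\squa{\ind\brac{|\tau|\,\norm{t}\le 1}}=\phi(\norm{t})=\phi_n(t)
\]
for all $t\in\cK^n$ (with the usual convention at $t=0$). Thus $\phi_n$ is the Fourier transform of the law of $(\tau\eta_i)_{i\in[n]}$, and the Fourier transform of a probability measure is always nonnegative definite, so this direction uses only the elementary half of Bochner's theorem.

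For the ``only if'' direction, assume every $\phi_n$ is nonnegative definite. Each $\phi_n$ is bounded by $\phi_n(0)=\phi(0)=1$ and is continuous --- it is locally constant off the origin, and continuity at the origin is precisely the hypothesis $\lim_{m\to\infty}\phi(q^{-m})=\phi(0)$ --- so by Bochner's theorem on the locally compact abelian group $\cK^n$ there is a probability measure $\mu_n$ on $\cK^n$ with $\hat\mu_n=\phi_n$. Since $\phi_n(Ux)=\phi(\norm{Ux})=\phi(\norm{x})=\phi_n(x)$ for $U\in\mathrm{GL}_n(\cD)$, injectivity of the Fourier transform and closure of $\mathrm{GL}_n(\cD)$ under transposition make $\mu_n$ invariant under $\mathrm{GL}_n(\cD)$; and since $\hat\mu_n(t_1,\dots,t_{n-1},0)=\phi_{n-1}(t_1,\dots,t_{n-1})$, the family $(\mu_n)_{n\in\N}$ is consistent. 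Kolmogorov's extension theorem then yields a $\cK$-valued random sequence $\xi=(\xi_i)_{i\in\N}$ with $\cL((\xi_i)_{i\in[n]})=\mu_n$ for all $n$, and the invariance of the marginals means $\xi$ is rotatable. By Theorem~\ref{T:Freedman}, $\xi_i=\tau\eta_i$ with the $\eta_i$ independent standard $\cK$-Gaussian and $\tau$ an independent $\brac{\rho^m:m\in\Z}\cup\{0\}$-valued random variable, so for $\abs{t}=q^k$,
\[
\phi(q^k)=\phi_1(t)=\hat\mu_1(t)=\bE[\chi(t\tau\eta_1)]=\bP\brac{|\tau|\le q^{-k}},
\]
which is nonnegative and nonincreasing in $k$; combined with $\phi(q^k)\le 1=\phi(0)$ this shows $\phi$ is nonnegative and nonincreasing on all of $\{0\}\cup\brac{q^m:m\in\Z}$.

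I expect the only friction to be administrative rather than conceptual: checking that $\phi_n$ is continuous (hence that Bochner's theorem applies), verifying that $\mathrm{GL}_n(\cD)$-invariance passes from $\phi_n=\hat\mu_n$ back to $\mu_n$, and the bookkeeping in the construction of $\tau$ and the telescoping identity $\bP\brac{|\tau|\le q^{-k}}=\phi(q^k)$. The genuinely substantive inputs --- the computation $\hat\gamma_n=\ind_{\cD^n}$ and the classification in Theorem~\ref{T:Freedman} --- are already available, so no new difficulty is anticipated.
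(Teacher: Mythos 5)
Your proposal is correct and follows essentially the same route as the paper: Bochner's theorem plus the consistency of the marginals and Kolmogorov extension produce a rotatable sequence, Theorem~\ref{T:Freedman} identifies it as $\tau\eta$, and the identity $\phi(\norm{t})=\bP\{|\tau|\,\norm{t}\le 1\}$ translates nonnegative definiteness into monotonicity of $\phi$. Your treatment of the ``if'' direction is in fact slightly more explicit than the paper's (which compresses it into ``this is equivalent to $\phi$ having the desired properties''): you construct $\tau$ directly from the increments $\phi(q^k)-\phi(q^{k+1})$ and invoke only the elementary half of Bochner's theorem there, which is a clean and correct way to close that implication.
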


\begin{proof}
By Bochner's theorem for locally compact groups (see Section 36 of \cite{MR0054173}, also see \cite{MR0005741}), the function $\phi_n$ is nonnegative definite if there exists some $\cK^n$-valued random variable $\xi^{(n)}$ such that $\phi_n(t) = \bE[\chi(t \cdot \xi^{(n)})]$ for $t \in \cK^n$; that is, $\phi_n$ is the characteristic function of $\xi^{(n)}$. Using the definition of $\phi_n$, this is
$$\bE[\chi(t \cdot \xi^{(n)})] = \phi(\norm{t}).$$

We claim for $n \in \N$ and $m \in [n]$ that $(\xi_i^{(n)})_{i \in [m]}$ has the same distribution as $\xi^{(m)}$. Indeed, for $t \in \cK^m$,
\begin{align*}
\bE[\chi(t \cdot \xi^{(m)})] &= \phi(|t_1| \vee \cdots \vee |t_m|) \\
&= \phi(|t_1| \vee \cdots \vee |t_m| \vee 0 \vee \cdots \vee 0) \\
&= \bE[\chi((t_1,\dots,t_m,0, \ldots,0) \cdot \xi^{(n)})]
\end{align*}
and the uniqueness of characteristic functions completes the proof of the claim. By the Kolmogorov extension theorem, there exists an infinite $\cK$-valued random sequence $\xi$ such that $(\xi_i)_{i \in [n]}$ has the same distribution as $\xi^{(n)}$ for each $n \in \N$.

We next claim that the random sequence $\xi$ is rotatable. Given $n \in \N$ and
$U \in \mathrm{GL}_n(\cD)$,
\begin{align*}
\bE[\chi(t \cdot U (\xi_i)_{i \in [n]})] 
&= \bE[\chi(U^\top t \cdot (\xi_i)_{i \in [n]})] \\
&= \phi(\| U^\top t \|) \\
&= \phi(\| t \|) \\
&= \bE[\chi(t \cdot (\xi_i)_{i \in [n]})],
\end{align*}
which shows that $U (\xi_i)_{i \in [n]}$ has the same distribution as $(\xi_i)_{i \in [n]}$.

Using Theorem~\ref{T:Freedman}, we have that $\xi = \tau \eta$ almost surely, where $\eta$ is a sequence of independent, identically distributed, standard $\cK$-Gaussian random variables. So $\phi_n$ is nonnegative definite for each $n \in \N$ if and only if $\phi(\norm{t}) = \bE[\chi(t \cdot \tau(\eta_i)_{i \in [n]})]$ for each $n \in \N$. Thus
\begin{align*}
\phi(\|t\|) 
&= \mathbb{E}[\chi(t \cdot \tau(\eta_i)_{i \in [n}])] \\
&= \mathbb{E}[\chi((\tau t)\cdot (\eta_i)_{i \in [n]})] \\
&= \mathbb{E}[\ind_{\{\|\tau t\| \le 1\}}] \\
&= \begin{cases}
1,& \|t\| = 0,\\
\mathbb{P}\{|\tau| \le \frac{1}{\|t\|}\},& \|t\| \ne 0.\\
\end{cases}
\end{align*}
This is equivalent to $\phi$ having the desired properties.
\end{proof}

\bigskip
\noindent
{\bf Acknowledgments:} We thank Persi Diaconis and an anonymous referee for a number of helpful suggestions.

\providecommand{\bysame}{\leavevmode\hbox to3em{\hrulefill}\thinspace}
\providecommand{\MR}{\relax\ifhmode\unskip\space\fi MR }
\providecommand{\MRhref}[2]{%
  \href{http://www.ams.org/mathscinet-getitem?mr=#1}{#2}
}
\providecommand{\href}[2]{#2}

\end{document}